\documentclass[11pt]{article}
\usepackage[T1]{fontenc}
\usepackage[utf8]{inputenc}
\usepackage{lmodern}
\usepackage[margin=1in]{geometry}
\usepackage{amsmath,amssymb,amsthm,mathtools, times, color}
\usepackage{microtype}
\numberwithin{equation}{section}
\usepackage[colorlinks,linkcolor=blue,anchorcolor=blue,citecolor=blue]{hyperref}
\newcommand{\beq}{\begin{equation}}
\newcommand{\eeq}{\end{equation}}
\newcommand{\beqn}{\begin{eqnarray}}
\newcommand{\eeqn}{\end{eqnarray}}

\newtheorem{theorem}{Theorem}[section]
\newtheorem{proposition}[theorem]{Proposition}
\newtheorem{lemma}[theorem]{Lemma}
\newtheorem{corollary}[theorem]{Corollary}
\theoremstyle{definition}
\newtheorem{definition}[theorem]{Definition}
\newtheorem{remark}[theorem]{Remark}
\newtheorem{conjecture}[theorem]{Conjecture}
\newtheorem{claim}[theorem]{Claim}

\newcommand{\lemref}[1]{Lemma~\ref{#1}}

\DeclareMathOperator{\Ric}{Ric}
\DeclareMathOperator{\Rm}{Rm}
\DeclareMathOperator{\tr}{tr}

\newcommand{\R}{\mathbb R}

\newcommand{\cH}{\mathcal H}
\newcommand{\cC}{\mathcal C}

\newcommand{\vphi}{\varphi}
\newcommand{\om}{\omega}
\newcommand{\Om}{\Omega}
\newcommand{\p}{\partial}

\title{Long-time existence of the Calabi flow
on K\"ahler surfaces \\with bounded Sobolev constants}

\author{Haozhao Li\footnote{Supported by NSFC grant  No. 12471058 and a research fund from  University of Science and Technology of China.}, \quad Bing Wang\footnote{Supported by NSFC grant No. 12431003 and a research fund from University of Science and Technology of China.} \quad and \quad Kai Zheng
\footnote{Supported by NSFC grant No. 12571091.}}

\date{}

\begin{document}
\maketitle

\begin{abstract}
We prove that the Calabi flow extends past any finite time on  compact K\"ahler surfaces if its Sobolev constants remain uniformly bounded. Moreover,  we show the long-time existence of Calabi flow  in certain K\"ahler classes on  compact K\"ahler surfaces when the initial Calabi energy is strictly below an explicit threshold.
\end{abstract}

\begingroup
\small
\tableofcontents
\endgroup

\section{Introduction}
\label{sec:statement}
In \cite{Calabi}
Calabi  introduced the Calabi flow to study canonical metrics in a fixed K\"ahler class, particularly extremal K\"ahler metrics and constant scalar curvature K\"ahler (cscK) metrics. In this paper, we study finite-time extension of the flow on compact K\"ahler surfaces under a uniform bound for the Sobolev constants.

Let $(M,J,\omega)$ be a compact K\"ahler surface, where $J$ is the complex structure and $\omega$ is a K\"ahler form. Write $\Omega=[\omega]$ and let $\mathcal H_\omega$ denote the space of smooth real-valued potentials $\varphi$ such that $\omega_\varphi=\omega+i\partial\bar\partial\varphi>0$. We define the normalised volume form $d\mu_\varphi$ and the volume $V$ by
\begin{align}\label{normalised volume form}
d\mu_\vphi=V^{-1}\omega_\vphi^2,\qquad V=\Om^2=\int_M\omega^2.
\end{align}
Let $R_\varphi$ denote the K\"ahler scalar curvature. In terms of the first Chern class $c_1(M)$, we have
\begin{align}\label{average scalar formhula}
\underline R=\int_M R_\vphi\,d\mu_\vphi
             =\frac{4\pi}{V}\,c_1(M)\cdot\Omega.
\end{align}
Thus $V$ and $\underline R$ depend only on the fixed K\"ahler class and $c_1(M)$. In terms of K\"ahler potentials, we write the Calabi flow as
\beq
\partial_t\varphi=R_\varphi-\underline R,\qquad
\varphi(\cdot,0)=\varphi_0.
\label{eq:exact-Calabi-gauge}
\eeq
The normalised Calabi energy is defined by
\begin{align*}
\cC(\vphi)=\int_M(R_\vphi-\underline R)^2\,d\mu_\vphi.
\end{align*}
Along the flow, this energy is nonincreasing. With respect to the $L^2$ metric on the space of K\"ahler potentials, the Calabi flow is the negative gradient flow of the Mabuchi K-energy \cite{MabuchiEnergy,MabuchiGeometry}. Fine \cite{Fine} gave a projective approximation of the smooth Calabi flow by balancing flows. Despite these geometric and variational interpretations, the fourth-order nature of the equation makes finite-time degeneration difficult to control.

On compact Riemann surfaces, Chru\'sciel \cite{Chrusciel}, Chen \cite{ChenSurfaces}, and Struwe \cite{Struwe} established long-time existence and convergence by different methods. In higher complex dimensions, Chen and He \cite{ChenHeContinuation} developed the short time existence and stability theory near cscK metrics. Huang-Zheng \cite{MR2953047} extended the stability theory near Calabi's extremal metrics. He \cite{MR3010280} and later He and Zeng \cite{HeZeng} subsequently established short time existence and instantaneous smoothing for rough initial data, including continuous K\"ahler metrics. The general long time existence problem remains open. Chen \cite{MR3858468} formulated the following conjecture.
\begin{conjecture}[X.~X.~Chen]\label{Chen conjecture}
For every initial K\"ahler metric on a compact K\"ahler manifold, the Calabi flow exists for all time.
\end{conjecture}
Several authors have obtained curvature criteria for extending the flow. Chen and He \cite[Theorem~1.5]{ChenHeContinuation} proved extension past a finite time under a uniform Ricci curvature bound. By the a priori estimates of Chen and Cheng \cite{ChenCheng}, it suffices to assume a uniform scalar curvature bound. Li, Zhang, and Zheng \cite[Theorem~1.2]{LiZhangZheng} weakened this hypothesis to a uniform $L^p$ bound for the scalar curvature, with $p>n$, where $n$ is the complex dimension. More recently, Li and Zhang \cite{LiZhangII} obtained extension criteria involving suitable space-time integrals of the scalar curvature. Under additional geometric assumptions, long-time existence and convergence are also known for initial metrics with small Calabi energy. Tosatti and Weinkove \cite{TosattiWeinkove} proved such results when the first Chern class vanishes and in certain classes when it is negative. Li and Zheng \cite{LiZheng} obtained further small energy results using geometric bounds and eigenvalue estimates. Assuming global existence, Li, Wang, and Zheng \cite{LiWangZheng} used regularity scales to establish convergence results on extremal K\"ahler surfaces, with analogous results in higher dimensions under an additional scalar curvature bound.

From the variational viewpoint, Streets \cite{Streets} constructed global minimising movements for the K-energy and subsequently proved that they agree with smooth Calabi flow solutions for as long as the latter exist \cite{StreetsConsistency}. Berman \cite{Berman} studied conditional convergence in the canonical and anti-canonical classes. Building on the metric geometry of finite-energy potentials, Berman, Darvas, and Lu \cite{BermanDarvasLu} developed the weak Calabi flow on the finite energy completion and studied its long-time behaviour. These weak existence results do not by themselves establish the smooth extension asserted in Conjecture~\ref{Chen conjecture}.

For singularity analysis under a uniform Sobolev bound, Chen and He \cite{ChenHe} established a particularly useful description in complex dimension two. They proved that a maximal-curvature blow-up yields a complete nonflat scalar-flat K\"ahler asymptotically locally Euclidean (ALE) surface \cite[Theorem~1.1 and Section~4]{ChenHe}. Thus, to obtain a curvature bound, it suffices to exclude such bubbles. In the toric setting, Chen and He \cite{ChenHe,ChenHeToric} excluded bubbles under suitable energy and geometric assumptions, and Huang \cite{Huang} obtained potential estimates under a uniform Sobolev bound. These results suggest studying whether the Sobolev bound alone rules out finite-time singularities on a general compact K\"ahler surface.

In the present paper, we answer this question affirmatively by combining a potential estimate, a localisation argument, and a topological obstruction. For $\varphi\in\mathcal H_\omega$, let $g_\varphi$ be the real Riemannian metric associated with $\omega_\varphi$. Denote by $C_S(\varphi)$ the least constant in the $W^{1,2}\to L^4$ Sobolev inequality \eqref{eq:Sobolev}, with respect to $g_\varphi$ and the probability measure $d\mu_\varphi$. Our main result is the following.

\begin{theorem}
\label{thm:main}
Let $(M, g)$ be a compact K\"ahler surface, and let $\varphi(t)$ be a solution of the Calabi flow on the finite time interval $[0,T)$ with $T<\infty$. Suppose
$$
 \sup_{0\le t<T}C_S(\varphi(t))<\infty.
$$
Then the Calabi flow can be smoothly extended past time $T$.

\end{theorem}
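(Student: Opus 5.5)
We argue by contradiction and assume the flow cannot be extended past $T$. By the extension criterion of Chen and He \cite[Theorem~1.5]{ChenHeContinuation}, combined with the a priori estimates of Chen and Cheng \cite{ChenCheng}, the curvature must then blow up, i.e. $Q(t)=\sup_M|\Rm(g_{\varphi(t)})|\to\infty$ as $t\to T$. Along the flow the volume $V$ is fixed, and the Calabi energy $\cC(\varphi(t))\le\cC(\varphi_0)$ is nonincreasing. By Chern--Weil theory on a surface, the $L^2$ norm of the full curvature tensor is controlled by $\cC$ and the topological numbers $c_1^2(M)$, $c_2(M)$, $c_1(M)\cdot\Om$. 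Together with the uniform Sobolev constant $C_S$, this gives exactly the setting of Chen and He \cite[Theorem~1.1]{ChenHe}. Choose $(x_i,t_i)$ with $t_i\to T$ that almost realise the maximal curvature, and rescale $\tilde g_i(s)=Q_i\,g(t_i+Q_i^{-2}s)$. A subsequence then converges smoothly on compact sets to a complete, nonflat, scalar-flat K\"ahler ALE surface $(X,h)$. The uniform Sobolev bound supplies the non-collapsing needed here, and it also gives a Sobolev inequality $\|f\|_{L^4(X)}\le C\|\nabla f\|_{L^2(X)}$ on $X$. The plan is to rule out such a bubble.

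The first step is a potential estimate in the spirit of Huang \cite{Huang}. We will use the Sobolev bound and Moser iteration to show $\sup_{[0,T)}\|\varphi(t)\|_{C^0}<\infty$. The flow equation gives $\partial_t\varphi=R_\varphi-\underline R$, and the time integral of this is bounded in $L^2$ by $T^{1/2}\cC(\varphi_0)^{1/2}$. The Sobolev inequality then upgrades this $L^2$ control to $L^\infty$ control through a Moser iteration, applied for instance to $\varphi-\inf\varphi$, which satisfies $\Delta_\varphi\varphi\le 2$, and to the mean-value-type normalisation. The output we need is a uniform oscillation bound on the Kähler potential. Once rescaled, this becomes the statement that the bubble region has \emph{potential} $Q_i\varphi$ relative to a rescaled background that is almost flat.

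The second step is localisation. On the bubble, $\omega_{\varphi(t_i)}=\omega+i\p\bar\p\varphi(t_i)$. Rescaling around $x_i$ in holomorphic coordinates of the fixed background $\omega$, the background becomes asymptotically Euclidean on balls of radius $\sim Q_i^{1/2}$. We will then take a large sphere $S_r\subset X$ at the ALE end, so that $X_r$ is diffeomorphic to the bubble region inside $M$, and we want to conclude that the class of $\omega_h$ restricted to $X$ is trivial in compactly supported cohomology. The strategy is to pass the $C^0$ bound on $\varphi$ to the limit, which shows that $\omega_h=i\p\bar\p\psi$ with $\psi-|z|^2/2$ bounded, where $z$ are the flat coordinates. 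For $\omega_h=i\p\bar\p\psi$ with $\psi$ of quadratic growth plus a bounded term, we then want $\int_X\omega_h^2$ equal to the Euclidean volume growth, so that the ALE group $\Gamma$ is trivial and the volume ratio is exactly $1$.

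The third step is the topological obstruction. A scalar-flat Kähler ALE surface with trivial group at infinity and Euclidean volume ratio is flat: by Bishop--Gromov with $\Ric$ integrable this is not immediate, so we instead use that such a space is biholomorphic to a blow-up of $\mathbb C^2$. On such a blow-up, the exceptional curves $E$ satisfy $\int_E\omega_h>0$, yet exactness $\omega_h=i\p\bar\p\psi$ with a global $\psi$ forces $\int_E\omega_h=0$. Hence there are no curves, $X\cong\mathbb C^2$, and by the mass/positive-mass argument, scalar-flat with vanishing mass, it is flat. This contradicts nonflatness, so the curvature stays bounded and the flow extends. The main obstacle is the second step: showing that the global potential bound survives the rescaling in a form strong enough to make $\omega_h$ $i\p\bar\p$-exact with controlled growth. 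Near $x_i$ the background potential is not small after scaling by $Q_i$, and we would first try to write $\omega=i\p\bar\p\rho$ locally with $\rho(x_i)=0$, $d\rho(x_i)=0$, and to control $Q_i(\rho+\varphi)$ only through its $i\p\bar\p$, using pluri-subharmonicity to obtain exactness on the bubble without needing pointwise bounds.
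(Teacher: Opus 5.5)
Your proposal has a genuine gap at Step~2, which you yourself flag as the main obstacle; the mechanism you propose there cannot work. After parabolic rescaling the relevant potential is $Q_i\varphi(t_i)$, and its sup norm is only bounded by $Q_iB\to\infty$, so no bound on $\psi-|z|^2/2$ passes to the limit. There is also no Laplacian or $C^2$ control comparing $\omega_{\varphi(t_i)}$ with $\omega$. The Cheeger--Gromov charts $F_i$ of the bubble therefore have no a priori relation to background holomorphic coordinates rescaled by $Q_i^{1/2}$. Even knowing that $F_i(U)$ has small $g$-diameter is a nontrivial theorem in the paper. So the claims that $\omega_h=i\p\bar\p\psi$ with $\psi-|z|^2/2$ bounded, and that $\Gamma$ is trivial, are unsupported.

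The missing idea is \emph{not} to rescale the potential. Set $\psi_i=\varphi(t_i)-\sup_M\varphi(t_i)$ and $u_i=-\psi_i\circ F_i$. Then $0\le u_i\le B$ and $dd^c_{J_i}u_i=F_i^*\omega-Q_i^{-1}\Omega_i\ge -Q_i^{-1}\Omega_i$, so a weak limit of $u_i$ is a bounded plurisubharmonic function on the bubble. Such a function is constant, by the Hein--R\u{a}sdeaconu--\c{S}uvaina compactification and the removable singularity theorem. This step requires $J_\infty$ to be ALE, which the paper derives from Tian--Viaclovsky decay and $\nabla J_\infty=0$. Integrating $dd^c_{J_i}u_i$ against $\chi\Omega_i$ then gives $\int_K F_i^*\omega\wedge\Omega_i\to0$. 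In other words, the holomorphic, hence harmonic, maps $F_i:(U_i,h_i)\to(M,g)$ have vanishing local energy. $\varepsilon$-regularity then gives $\sup_K|dF_i|\to0$, so a compact core $U$ lands in a coordinate ball and embeds in $S^4$.

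Even granting such a localisation, your Step~3 would still not close the argument. What it extracts is exactness of $\omega_h$, hence no compact curves, and this forces neither $\Gamma=\{1\}$ nor flatness. For example, take the quotient of Eguchi--Hanson on $\{x^2+y^2+z^2=1\}$ by $(x,y,z)\mapsto-(x,y,z)$. This is a nonflat Ricci-flat K\"ahler ALE surface with group $\frac14(1,1)$ at infinity and $H^2(X;\mathbb R)=0$, and it has no compact curves because it is Stein.

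The paper instead uses the finer topology of $U\hookrightarrow S^4$. Mayer--Vietoris and Poincar\'e--Lefschetz duality give $H_2(U)=0$ and $H_1(Y)\cong A\oplus A$ with $A=H_1(U)$. Wright's result turns $b_2=0$ into Ricci-flatness. \c{S}uvaina's classification then says nonflat examples have $\pi_1=\mathbb Z/k$ with $k\ge2$ and cyclic $H_1(Y)=\mathbb Z/dk^2$, which contradicts $H_1(Y)\cong\mathbb Z/k\oplus\mathbb Z/k$. In the example above, $\mathbb Z/4\not\cong\mathbb Z/2\oplus\mathbb Z/2$.

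A smaller point concerns Step~1. The bound $\int_0^t\|\dot\varphi\|_{L^2(\mu_s)}\,ds\le t\sqrt{\mathcal C(\varphi_0)}$ is taken in varying measures. Moser iteration for $\varphi(t)-\varphi_0$ (with respect to $\omega_{\varphi_0}$) and for $\varphi_0-\varphi(t)$ (with respect to $\omega_{\varphi(t)}$) needs $L^2$ bounds in both endpoint measures. The paper obtains these from Darvas's comparison $d_2^2\simeq I_2$.
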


We next estimate the Sobolev constant in terms of the K\"ahler class and the initial Calabi energy. Together with Theorem~\ref{thm:main}, this yields the following long time existence result.

\begin{corollary}
\label{cor:Tian-main}Let $(M, g)$ be a compact K\"ahler surface, and let $\varphi(t)$ be the maximal smooth Calabi flow in the fixed class
$\Omega=[\omega]$ with the initial potential
$\varphi(\cdot,0)=\varphi_0\in\cH_\omega$. Suppose
\beq
 c_1(M)\cdot\Omega>0,\qquad  \delta_\Omega=c_1(M)^2
       -\frac23\frac{(c_1(M)\cdot\Omega)^2}{V}>0,\qquad
 \cC(\varphi_0)<\frac{16\pi^2}{V}\delta_\Omega.
 \label{eq:Tian-main-hypotheses}
\eeq
Then the Calabi flow exists for all $t\ge0$.
\end{corollary}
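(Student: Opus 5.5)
The plan is to reduce Corollary~\ref{cor:Tian-main} to Theorem~\ref{thm:main}. It suffices to show that, under \eqref{eq:Tian-main-hypotheses}, the Sobolev constants $C_S(\varphi(t))$ stay uniformly bounded on every finite interval of existence. Then Theorem~\ref{thm:main} extends the flow past any finite $T$, and the maximal existence time is infinite.

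The Sobolev bound will come from a positive lower bound on the Yamabe-type quantity of $g_{\varphi(t)}$ in the fixed conformal class, obtained through Chern--Weil and Gursky-type arguments. First, on a Kähler surface $|W^+|^2$ is determined by the scalar curvature: $|W^+|^2 = R^2/24$ in the Riemannian normalisation, with the constant adjusted to the Kähler convention $R_\varphi$ used here. Second, the Chern--Weil formula for $c_1^2 = 2\chi+3\tau$ gives
\begin{align*}
4\pi^2 c_1(M)^2 V^{-1} = \int_M \Big(\tfrac{R^2}{c_0} - \tfrac12|\Ric^\circ|^2\Big)\,d\mu_\varphi
\end{align*}
for an explicit constant $c_0$. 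Writing $\int R^2\,d\mu = \underline R^2 + \cC(\varphi)$ and using \eqref{average scalar formhula}, the hypothesis $\delta_\Omega>0$ together with the energy bound should yield
\begin{align*}
\int_M|\Ric^\circ|^2\,d\mu_\varphi \le C(\cC(\varphi_0),\Omega).
\end{align*}
More importantly, the threshold $\cC<16\pi^2\delta_\Omega/V$ should be exactly the condition under which $\int (R-\underline R)^2$ is too small to let the Yamabe constant degenerate. Since $\cC$ is nonincreasing along the flow, this strict inequality is preserved for all $t$.

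The main step is to prove $Y([g_{\varphi(t)}])\ge y_0>0$ with $y_0$ depending only on $\Omega$ and the gap $16\pi^2\delta_\Omega/V-\cC(\varphi_0)$. First, $c_1\cdot\Omega>0$ gives $\underline R>0$, so Gursky's inequality $\int|W^+|^2\ge \frac{1}{24}Y^2$ (valid when $Y>0$), combined with the identity $|W^+|^2\propto R^2$, relates $Y$ to $\int R^2$. Second, to get $Y>0$ with a quantitative bound, I would use a Tian-type argument: take a Yamabe minimiser $\tilde g = u^2 g$ of volume $V$ and compare $\int R^2$ with $\tilde Y^2$ through the conformally invariant $\int |W^+|^2$ and the Gauss--Bonnet/signature terms. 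The term $\frac23(c_1\cdot\Omega)^2/V$ in $\delta_\Omega$ suggests that the relevant Cauchy--Schwarz is $\underline R^2 V$ against $Y^2$, and that the energy condition forces $Y^2\ge$ a positive multiple of the gap. A continuity argument in $t$ would rule out $Y$ crossing zero, since $Y>0$ at $t=0$ whenever the energy is below threshold.

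Finally, a positive Yamabe constant with volume normalised to $V$ gives a Sobolev inequality with controlled constant: $\|f\|_{L^4}^2\le Y^{-1}\int(6|\nabla f|^2+R f^2)$. I would then bound $\int R f^2$ by $\|R\|_{L^2}\|f\|_{L^4}^2$, which is not small. Absorbing this term will need care, and I expect this absorption, together with the exact bookkeeping of constants matching the threshold $16\pi^2\delta_\Omega/V$, to be the main obstacle. As a fallback I would use Gursky's modified Yamabe functional with $R-\sqrt6|W^+|$, which in the Kähler case is a constant multiple of $R$, so the only error term is $\int (R-\underline R) f^2$, controlled by $\sqrt{\cC}\,\|f\|_{L^4}^2$.
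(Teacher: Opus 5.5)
Your architecture matches the paper's. You reduce to Theorem~\ref{thm:main} through a uniform Sobolev bound, take the Yamabe minimiser $q_t=u_t^2g_t$ (unit volume in the paper's normalisation, so $R_{\mathrm{real}}(q_t)\equiv\mathcal Y_t$), and combine conformal invariance of $\int|W^+|^2$, the K\"ahler identity $|W^+|^2=R_{\mathrm{real}}^2/24$ for $g_t$, and Chern--Weil for $c_1^2$. The gap is at the decisive step, which you leave as ``the main obstacle''. Moreover, the target you set, $\mathcal Y_t\ge y_0>0$, would not suffice for it. Split $R_{\mathrm{real}}=2\underline R+(R_{\mathrm{real}}-2\underline R)$ in the Yamabe quotient and apply Cauchy--Schwarz to the fluctuation. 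This gives
\[
 (\mathcal Y_t-\sigma_t)\Bigl(\int_Mf^4\,dV_t\Bigr)^{1/2}
 \le6\int_M|\nabla f|^2\,dV_t+2\underline R\int_Mf^2\,dV_t,
 \qquad
 \sigma_t^2=\int_M(R_{\mathrm{real}}-2\underline R)^2\,dV_t=2V\cC(\varphi(t)).
\]
The mean term is harmless because \eqref{eq:Sobolev} is inhomogeneous. What must be proved is therefore $\mathcal Y_t-\sigma_t\ge\gamma_0>0$ uniformly in $t$; a positive lower bound on $\mathcal Y_t$ alone gives nothing once $\sigma_t\ge\mathcal Y_t$.

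This is exactly what the threshold is for, and you have misread its role. Chern--Weil applied to $q_t$ yields
\[
 \mathcal Y_t^2\ge96\pi^2c_1(M)^2-2\int_MR_{\mathrm{real}}(g_t)^2\,dV_t
 =96\pi^2\delta_\Omega-4V\cC(\varphi(t)).
\]
Here the $\tfrac23$ in $\delta_\Omega$ comes from the mean part $4V\underline R^2=64\pi^2(c_1(M)\cdot\Omega)^2/V$, not from a Cauchy--Schwarz against $\mathcal Y^2$. This bound alone would give $\mathcal Y_t\ne0$ under the weaker condition $\cC<24\pi^2\delta_\Omega/V$. The actual threshold $16\pi^2\delta_\Omega/V$, together with monotonicity of $\cC$, is precisely what gives $\mathcal Y_t^2-\sigma_t^2\ge96\pi^2\delta_\Omega-6V\cC(\varphi_0)>0$.

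The sign of $\mathcal Y_t$ is also not handled. Your continuity argument presupposes $\mathcal Y_0>0$, which is the same problem at $t=0$. The paper settles it for each $t$ directly, and this is where $c_1(M)\cdot\Omega>0$ enters. Since $\underline R>0$, the same splitting shows that the numerator of the Yamabe quotient is at least $-\sigma_t(\int f^4)^{1/2}$, so $\mathcal Y_t\ge-\sigma_t$. Together with $\mathcal Y_t^2>\sigma_t^2$, this forces $\mathcal Y_t>\sigma_t$. Quantitatively, $\mathcal Y_t-\sigma_t\ge\sqrt{96\pi^2\delta_\Omega-4V\cC(\varphi_0)}-\sqrt{2V\cC(\varphi_0)}>0$, and no continuity in $t$ is needed.

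Two smaller points. The inequality $\int|W^+|^2\ge\mathcal Y^2/24$ that you invoke is, for K\"ahler metrics, just $\mathcal Y^2\le\int R_{\mathrm{real}}^2$. That is an upper bound on $\mathcal Y$ and cannot help with the lower bound. The fallback via $R-\sqrt6|W^+|$ is unnecessary, and it is also inaccurate as stated. In the paper's normalisation, for a K\"ahler surface it equals $R_{\mathrm{real}}-\tfrac12|R_{\mathrm{real}}|$, which is not a constant multiple of $R$ unless $R$ has a sign.
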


\begin{remark}
The relevant estimates for the Sobolev constant appear in Chen--LeBrun--Weber \cite[Section~5]{CLW} and in Chen--He \cite[Lemma~2.3]{ChenHeToric}. The cohomological expression $\delta_\Omega$ is the one used in \cite[Appendix]{ChenHe}. In \cite[Theorem 1.2]{ChenHe}, Chen and He also proved long time existence and subsequential convergence to a cscK metric on toric Fano surfaces, assuming a condition on the initial energy. This result was extended to toric extremal metrics in \cite[Theorem 1.1]{ChenHeToric} under an additional condition on the positivity of the extremal Hamiltonian potential. In Corollary~\ref{cor:Tian-main}, we impose no toric invariance assumption. Moreover, the first positivity condition in \eqref{eq:Tian-main-hypotheses} is weaker than the Fano condition $c_1(M)>0$.
\end{remark}

We outline the proof of Theorem~\ref{thm:main}. Suppose that the curvature becomes unbounded. By the blow-up analysis of Chen--He \cite{ChenHe}, we obtain a scalar flat K\"ahler ALE bubble. The curvature decay results of Anderson \cite{MR2148795} and Tian--Viaclovsky \cite{TV}, together with the fact that the complex structure is parallel, allow us to apply the compactification theorem of Hein, R\u{a}sdeaconu, and \c{S}uvaina
\cite{HRS}. It follows that every bounded plurisubharmonic function
on the bubble is constant. Using the bounded potentials, we then show that the pullbacks of the reference K\"ahler form have vanishing mass on compact subsets of the bubble. With respect to the pulled-back complex structures $J_i=F_i^*J$ and the pulled-back rescaled metrics $h_i$, the convergence embeddings $$F_i:(U_i,J_i,h_i)\to(M,J,g)$$ are holomorphic between K\"ahler manifolds and hence harmonic. By applying the local harmonic-map estimate (Lemma~\ref{lem:harmonic}), see Lin \cite{Lin}, we show that the images of compact domains have diameter tending to zero in the fixed reference metric. Thus a compact core of the bubble embeds in a coordinate ball in
$M$, and hence in $S^4$ (cf. Lemma~\ref{lem:localisation}).
Such an embedding forces the second integral homology of the core to vanish and imposes a direct-sum constraint on the first homology of its boundary. By the ALE results of Wright \cite{Wright}, Bando, Kasue, and Nakajima \cite{BKN}, and \c{S}uvaina \cite{Suvaina}, we conclude that a scalar flat K\"ahler ALE surface with such a core must be flat. This contradicts the normalisation of the curvature at the base point of the bubble. Therefore, we obtain a uniform curvature bound and conclude by the extension theorem of Chen--He \cite{ChenHeContinuation}.

For Corollary~\ref{cor:Tian-main}, let $\mathcal Y_t$ be the Yamabe constant of $g_{\varphi(t)}$ and set
$\sigma_t^2=2V\cC(\varphi(t))$. We use the Yamabe minimiser
\cite{Schoen}, the conformal curvature identity in \cite{CLW}, and
Calabi energy monotonicity to obtain a uniform positive lower bound for $\mathcal Y_t-\sigma_t$. It then suffices to absorb the scalar curvature term in the Yamabe inequality, as in \cite[Lemma~2.3]{ChenHeToric}, to obtain the required uniform Sobolev bound. By Theorem~\ref{thm:main}, the maximal existence time is therefore infinite. The argument, including the sign of the Yamabe constant and all normalisation factors, is given in Section~\ref{sec:Tian-cone}.

The paper is organised as follows. In Section~\ref{sec:preliminaries}, we fix notation and recall the Calabi energy and the length estimates. The potential estimate, the local harmonic-map estimate, and the localisation and exclusion of ALE bubbles are established in Section~\ref{Estimates}. Finally, we prove Theorem~\ref{thm:main} and Corollary~\ref{cor:Tian-main} in Section~\ref{Proofs of the main results}.\\

\textbf{On the usage of AI.} The authors used  ChatGPT as an  assisting tool in the development of the paper. The authors take full responsibility for the contents of the paper.\\

\section{Preliminaries}
\label{sec:preliminaries}
\subsection{Notation}
Throughout the paper, we work on K\"ahler surfaces, except when explicitly discussing results in higher dimensions. Our conventions are
$$
 d^c=\frac{i}{2}(\bar\partial-\partial),\quad
 dd^c=i\partial\bar\partial.
$$
With the normalisation in \eqref{normalised volume form}, we have $\mu_\varphi(M)=1$. For $\vphi\in\mathcal H_\om$, let $dV_{g_\vphi}$ denote the Riemannian volume measure. Then
\begin{align}\label{volume form and measure}
 dV_{g_\vphi}=\frac12\omega_\vphi^2=\frac V2\,d\mu_\vphi,\qquad
 \operatorname{Vol}(M,g_\vphi)=\frac V2.
\end{align}
We use the following conventions for the Ricci form and the K\"ahler scalar curvature:
\begin{align}\label{scalar formhula}
 \Ric(\omega_\vphi)=-i\partial\bar\partial\log\det(g_{\vphi,j\bar k}),
 \qquad R_\vphi=\tr_{\omega_\vphi}\Ric(\omega_\vphi).
\end{align}
The real scalar curvature is twice
$R_\vphi$ and
$[\Ric(\omega_\vphi)]=2\pi c_1(M)$.
We use $\Rm(g_\vphi)$ for the real Riemann curvature tensor with its full tensor norm, and $\Ric(g_\vphi)$ for the real Ricci tensor. The complex and real Laplacians are related by
$$
 \Delta_\vphi f=g_\vphi^{j\bar k}\partial_j\partial_{\bar k}f,
 \qquad \Delta^{\R}_{g_\vphi}=2\Delta_\vphi.
$$
We use $\nabla$ for the real Riemannian gradient, so that $|\nabla f|_{\vphi}^2=2|\p f|_{\vphi}^2$ for every smooth function $f$. For brevity, let
$g(t)=g_{\varphi(t)}$ and $d\mu_t=d\mu_{\varphi(t)}$.

\begin{definition}[Sobolev constant] \label{defn:Sobolev}
For $\vphi\in\cH_\omega$, let $C_S(\vphi)$ be the least constant for which
\beq
 \left(\int_M|f|^4\,d\mu_\vphi\right)^{1/2}
 \le C_S(\vphi)\left(\int_M|\nabla f|_{\vphi}^2\,d\mu_\vphi
                         +\int_M|f|^2\,d\mu_\vphi\right)
 \label{eq:Sobolev}
\eeq
holds for every $f\in W^{1,2}(M,g_\vphi)$. Taking $f=1$ shows that $C_S(\vphi)\ge1$.
\end{definition}

\begin{definition} \label{defn:Sobolev-volume}
We say that a Calabi flow has a uniform Sobolev constant bound $C_S$ on the time interval $[0,T)$ if $C_S(\varphi(t))\le C_S$ for every $t\in [0,T)$. In terms of the Riemannian volume measure on $M$, this is equivalent to
\beq
 \begin{split}
 \left(\int_M|f|^4\,dV_{g(t)}\right)^{1/2}
 \le S_0\left(\int_M|\nabla f|_{g(t)}^2\,dV_{g(t)}
                         +\int_M|f|^2\,dV_{g(t)}\right),\quad
 S_0=C_S(V/2)^{-1/2}.
 \end{split}
 \label{eq:Sobolev-volume}
\eeq
\end{definition}

\subsection{The Calabi energy and the $L^2$ distance}
In the following calculation, we take all covariant derivatives with respect to $g_\varphi$. For a smooth real valued function $f$, define
$$
 \mathcal D_\varphi f
 =\bar\partial\bigl(\nabla^{1,0}_{g_\varphi}f\bigr),
 \qquad
 \nabla^{1,0}_{g_\varphi}f
 =g_\varphi^{j\bar k}f_{\bar k}\frac{\partial}{\partial z^j}.
$$
In holomorphic coordinates, the Calabi flow equation gives
$$
 \partial_t g_{\varphi,j\bar k}=R_{\varphi,j\bar k},\qquad
 \partial_t g_\varphi^{j\bar k}
 =-g_\varphi^{j\bar b}R_{\varphi,a\bar b}g_\varphi^{a\bar k},\qquad
 \partial_t\log\det g_\varphi=\Delta_\varphi R_\varphi.
$$
Differentiating $\Ric(\omega_\varphi)=-i\partial\bar\partial\log\det g_\varphi$ and taking the trace, we obtain
\beq
 \begin{split}
 \partial_t R_\varphi
 =-\Delta_\varphi^2R_\varphi
   -\bigl\langle\Ric(\omega_\varphi),i\partial\bar\partial R_\varphi
     \bigr\rangle_{\varphi},\quad
 \partial_t d\mu_t=(\Delta_\varphi R_\varphi)\,d\mu_t.
 \end{split}
 \label{eq:scalar-volume-variation-detailed}
\eeq
By integration by parts, it follows that
\beq
 \frac{d}{dt}\cC(\varphi(t))
 =-2\int_M|\mathcal D_\varphi R_\varphi|_{\varphi}^2\,d\mu_t\le0.
 \label{eq:calabi-monotonicity}
\eeq
For the computation, see \cite[Section~3]{MR2953047}. Consequently,
\begin{align}\label{Calabi energy bound}
\cC(\varphi(t))\le\cC(\varphi_0)
\end{align} throughout the smooth
existence interval.

\begin{lemma}\label{lem:both-endpoint-L2}
Along the Calabi flow $\vphi(t)$, there exists a universal constant $C_D>0$ such that
\beq
 \|\varphi(t)-\varphi_0\|_{L^2(\mu_{\varphi_0})}
   +\|\varphi(t)-\varphi_0\|_{L^2(\mu_t)}
\le C_D\cdot t\cdot \sqrt{\cC(\varphi_0)}.
 \label{eq:both-endpoint-L2}
\eeq
\end{lemma}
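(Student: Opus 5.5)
The plan is to control $\varphi(t)-\varphi_0$ by integrating the flow equation in time and using that the Calabi energy is nonincreasing, \eqref{Calabi energy bound}. The one difficulty is that the $L^2$ norms are taken against $\mu_t$ and $\mu_{\varphi_0}$, while the measure moves along the flow. I will handle this by working with the $L^2$ (Mabuchi) length of the path $s\mapsto\varphi(s)$, which is a length bound for the Mabuchi distance, together with the fact that for potentials the $L^2$ norm at either endpoint is bounded by that distance.

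First, the length of the path on $[0,t]$ is
\[
 \ell(t)=\int_0^t\Bigl(\int_M(\partial_s\varphi)^2\,d\mu_s\Bigr)^{1/2}ds
 =\int_0^t\sqrt{\cC(\varphi(s))}\,ds\le t\sqrt{\cC(\varphi_0)} .
\]
This follows from \eqref{eq:exact-Calabi-gauge} and \eqref{Calabi energy bound}. Hence the Mabuchi distance satisfies $d_2(\varphi_0,\varphi(t))\le t\sqrt{\cC(\varphi_0)}$. Note that the distance uses the measure $\omega_\varphi^2$, which is $V\,d\mu$. The normalisation is harmless as long as I use the probability measures consistently, since the factor $V^{1/2}$ cancels in the final comparison.

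Second, I invoke the $L^2$ endpoint estimate for the Mabuchi $d_2$ metric (Darvas; Chen for $C^{1,1}$ geodesics). For $u,v\in\mathcal H_\omega$ it gives
\[
 \|u-v\|_{L^2(\mu_u)}+\|u-v\|_{L^2(\mu_v)}\le C\,d_2(u,v)
\]
with a universal constant. To see this directly, let $u_s$ be the weak $C^{1,1}$ geodesic from $u$ to $v$. By convexity in $s$, $\dot u_0\le v-u\le\dot u_1$, and the speed is constant, $\|\dot u_s\|_{L^2(\mu_{u_s})}=d_2(u,v)$.

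1. On $\{v\ge u\}$ the bound $v-u\le\dot u_1$ controls $(v-u)_+$ in $L^2(\mu_v)$.
2. On $\{v\le u\}$ the bound $\dot u_0\le v-u$ controls $(v-u)_-$ in $L^2(\mu_u)$.
3. The remaining cross terms need a comparison between the measures $\mu_u$ and $\mu_v$ on sublevel sets. I will get this from the Pythagorean formula for $d_2$ applied to $\max(u,v)$ and its rooftop envelope $P(u,v)$, or alternatively from Darvas' inequality
\[
 \|u-v\|_{L^2(\mu_u)}\le C\,d_2(u,v).
\]

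Combining the two steps, applied with $u=\varphi_0$ and $v=\varphi(t)$, gives \eqref{eq:both-endpoint-L2} with $C_D=2C$.

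The main obstacle is the second step: the domination of endpoint $L^2$ norms by $d_2$. If I only want to cite it, I will quote Darvas' theorem that $d_p$ is comparable to $\|u-v\|_{L^p(\mu_u)}+\|u-v\|_{L^p(\mu_v)}$ up to dimensional constants. Since the dimension is fixed at two, those constants are universal, which is what the statement requires.
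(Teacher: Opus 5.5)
Your proposal is correct and follows the paper's proof. You bound $d_2(\varphi_0,\varphi(t))$ by the path length $\int_0^t\sqrt{\cC(\varphi(s))}\,ds\le t\sqrt{\cC(\varphi_0)}$ using the monotonicity of the Calabi energy, and then invoke Darvas' comparison of $d_2^2$ with $\int_M(\varphi(t)-\varphi_0)^2\,d\mu_{\varphi_0}+\int_M(\varphi(t)-\varphi_0)^2\,d\mu_t$, which is exactly what the paper cites; your partial direct geodesic argument, whose item~3 is only a pointer, is not needed, since the citation carries this step in the paper as well.
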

\begin{proof}
Let $d_2$ denote the distance induced by the $L^2$ metric on $\cH_\omega$. Along the Calabi flow $\vphi(t)$, we estimate this distance using \eqref{Calabi energy bound}:
\beq
 d_2(\varphi_0,\varphi(t))
 \le\int_0^t\|\partial_s\varphi(s)\|_{L^2(\mu_s)}\,ds
 =\int_0^t\sqrt{\cC(\varphi(s))}\,ds
 \le t\sqrt{\cC(\varphi_0)}.
 \label{eq:finite-time-d2}
\eeq
Note that there exists a universal constant $C$, independent of $\vphi_0$ and $\vphi_1$, such that
\[C^{-1} I_2(\vphi_0,\vphi_1) \leq d^2_2(\vphi_0,\vphi_1)\leq CI_2(\vphi_0,\vphi_1),\quad \forall \vphi_0,\vphi_1\in \mathcal H_\om\]
where the $I_2$ functional is defined as
\[
I_2(\vphi_0,\vphi_1)=\int_M(\varphi_1-\varphi_0)^2 d\mu_{\varphi_0}
   +\int_M(\varphi_1-\varphi_0)^2 d\mu_{\varphi_1},
\]
see \cite[Theorem~3]{Darvas}. Combining this comparison with \eqref{eq:finite-time-d2}, we obtain \eqref{eq:both-endpoint-L2}.
\end{proof}

\section{Estimates}\label{Estimates}

\subsection{A potential estimate}

\begin{lemma}
\label{lem:probability-Moser}
Let $\vphi\in\cH_\omega$ and suppose $C_S(\vphi)\le C_S$. If $f\in C^\infty(M,\R)$ satisfies $\Delta_\vphi f\ge-a$ for some constant $a\geq 1$, then we have
$$
 \|f_+\|_{L^\infty(M)}
 \le 4(a+1)C_S\bigl(1+\|f\|_{L^2(\mu_\vphi)}\bigr),
 \qquad f_+=\max\{f,0\}.
$$
\end{lemma}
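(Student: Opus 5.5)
We would prove the statement by Moser iteration with respect to the probability measure $d\mu_\vphi$, using the Sobolev inequality \eqref{eq:Sobolev} exactly as normalised there. The first reduction is to replace $f$ by $u=f_++1$, or equivalently to work with $\max\{f,k\}$ for $k=1$. Then $u\ge1$, $u$ is Lipschitz, and in the weak sense $\Delta_\vphi u\ge -a$ on the set where $u>1$. Since $u\ge1$, this gives $-\Delta_\vphi u\le a\le a u$ weakly. It is cleaner to carry this out on a smoothed version $\max_\epsilon$, or to test only against functions supported in $\{f>0\}$. The advantage is that $u\ge 1$ turns the inhomogeneous term $a$ into a linear term $au$, so the iteration becomes homogeneous.

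For $p\ge2$ we test $-\Delta_\vphi u\le a u$ against $u^{p-1}$, integrate against $d\mu_\vphi$ (equivalently $\om_\vphi^2$), and use $|\nabla f|^2=2|\p f|^2$. This gives
\[
\frac{4(p-1)}{p^2}\int_M|\p u^{p/2}|^2\,d\mu_\vphi\le a\int_M u^p\,d\mu_\vphi ,
\]
and therefore
\[
\int_M|\nabla u^{p/2}|^2\,d\mu_\vphi\le \frac{a p^2}{2(p-1)}\int_M u^p\,d\mu_\vphi\le a p\int_M u^p\,d\mu_\vphi .
\]
Applying \eqref{eq:Sobolev} to $u^{p/2}$ yields
\[
\|u\|_{L^{2p}}^{p}\le C_S(ap+1)\|u\|_{L^p}^p \le C_S(a+1)p\,\|u\|_{L^p}^p .
\]
Set $p_k=2^{k+1}$, starting from $p_0=2$. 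Iterating gives
\[
\|u\|_{L^\infty}\le \prod_{k\ge0}\bigl(C_S(a+1)2^{k+1}\bigr)^{2^{-k-1}}\|u\|_{L^2}.
\]
Since $\sum 2^{-k-1}=1$ and $\sum (k+1)2^{-k-1}=2$, the product equals $C_S(a+1)\cdot 4$. Finally $\|u\|_{L^2(\mu_\vphi)}\le 1+\|f\|_{L^2(\mu_\vphi)}$ because $\mu_\vphi$ is a probability measure, and $f_+\le u$. Together these give exactly the claimed bound $4(a+1)C_S(1+\|f\|_{L^2})$.

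The main obstacle is the constant bookkeeping, so that the final constant is exactly $4(a+1)C_S$ and not something larger. The key steps are:
\begin{itemize}
\item check the factor of $2$ between the real and complex gradients carefully;
\item absorb the $L^2$ term in \eqref{eq:Sobolev} via $ap+1\le (a+1)p$;
\item use $C_S\ge1$ and $a\ge1$ wherever they are needed.
\end{itemize}
A secondary technical point is justifying the test-function computation for the nonsmooth $u$. We handle it by noting that on $\{f>0\}$ the function $u$ equals $f+1$, which is smooth. The boundary contributions from $\{f=0\}$ have the favourable sign, since $\nabla u^{p-1}$ vanishes off $\{f>0\}$. Integrating by parts against $(f_++1)^{p-1}-1$, or using a standard smooth convex approximation of $\max\{\cdot,0\}$, makes this rigorous.
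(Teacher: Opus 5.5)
Your proposal is correct and follows the paper's proof essentially step for step. It uses the same test function $u^{p-1}$ with $u\ge1$, the same bound $1+\frac{ap^2}{2(p-1)}\le(a+1)p$, the same exponents $p_j=2^{j+1}$ producing the factor $4$, and the same final estimate $\|1+f_+\|_{L^2(\mu_\vphi)}\le1+\|f\|_{L^2(\mu_\vphi)}$. The only difference is cosmetic: the paper works throughout with the smooth convex regularisation $u_\varepsilon=1+\frac12\bigl(f+\sqrt{f^2+\varepsilon^2}\bigr)$ and lets $\varepsilon\to0$ at the end, whereas your main route uses the Lipschitz test function $(f_++1)^{p-1}-1$; both are rigorous, and in fact only $a\ge0$ is used, not $a\ge1$ or $C_S\ge1$.
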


\begin{proof}
For $\varepsilon>0$, we set
$$
 \eta_\varepsilon(s)=\frac12\left(s+\sqrt{s^2+\varepsilon^2}\right),
 \qquad u_\varepsilon=1+\eta_\varepsilon(f).
$$
Then $u_\varepsilon\ge1$ and
$$
 \eta_\varepsilon'(s)
 =\frac12\left(1+\frac{s}{\sqrt{s^2+\varepsilon^2}}\right)\in[0,1],
 \qquad
 \eta_\varepsilon''(s)
 =\frac{\varepsilon^2}{2(s^2+\varepsilon^2)^{3/2}}\ge0.
$$
By the assumed inequality for $f$, we have
$$
 \begin{aligned}
 \Delta_{\vphi}  u_\varepsilon
 =\eta_\varepsilon'(f)\Delta_{\vphi} f
       +\eta_\varepsilon''(f)|\p f|_{\vphi}^2\ge-a.
 \end{aligned}
$$
For fixed $\varepsilon$, set $u=u_\varepsilon$. Multiplying this inequality by $u^{p-1}$ for $p\ge2$ and integrating by parts, we obtain
$$
 (p-1)\int_Mu^{p-2}|\p u|_{\vphi}^2\,d\mu_\vphi
 \le a\int_Mu^{p-1}\,d\mu_\vphi.
$$
Since $u\ge1$, it follows that
$$
 \begin{aligned}
\int_M|\p u^{p/2}|_{\vphi}^2\,d\mu_\vphi
 &=\frac{p^2}{4}\int_Mu^{p-2}|\p u|_{\vphi}^2\,d\mu_\vphi\\
 &\le\frac{ap^2}{4(p-1)}\int_Mu^{p-1}\,d\mu_\vphi
 \le\frac{ap^2}{4(p-1)}\int_Mu^p\,d\mu_\vphi.
 \end{aligned}
$$
Applying the Sobolev inequality in Definition~\ref{defn:Sobolev} to $u^{p/2}$, we obtain
$$
 \|u\|_{L^{2p}(\mu_\vphi)}^p
 \le C_S\left(1+\frac{ap^2}{2(p-1)}\right)
           \|u\|_{L^p(\mu_\vphi)}^p
 \le(a+1)p\cdot C_S\|u\|_{L^p(\mu_\vphi)}^p.
$$
In the last inequality, it suffices to use $p\ge2$ and the elementary bound
$$1+\frac{ap^2}{2(p-1)}\le1+\frac{ap^2}{2p/2} \le p+ap = (a+1)p.$$
For the standard iteration argument, let $p_j=2^{j+1}$. Iterating from $p_0=2$ to $p_{N+1}$, we obtain
$$
 \|u\|_{L^{p_{N+1}}(\mu_\vphi)}
 \le\prod_{j=0}^{N}[(a+1)p_jC_S]^{1/p_j}\|u\|_{L^2(\mu_\vphi)}.
$$
Since
$$
 \sum_{j=0}^{\infty}\frac1{p_j}=1,
 \qquad
 \prod_{j=0}^{\infty}p_j^{1/p_j}
 =2^{\sum_{j=0}^{\infty}(j+1)/2^{j+1}}=4,
$$
we may let $N\to\infty$ to obtain
$$
 \|u_\varepsilon\|_{L^\infty(M)}
 \le4(a+1)C_S\|u_\varepsilon\|_{L^2(\mu_\vphi)}.
$$
From $$0\le\eta_\varepsilon(s)-\max\{s,0\}\le\varepsilon/2,$$ we see that
$$u_\varepsilon\to1+f_+\text{ uniformly as }\varepsilon\to0.$$
Since $\mu_\varphi(M)=1$, it follows that
$$\|1+f_+\|_{L^2(\mu_\vphi)}
 \le1+\|f\|_{L^2(\mu_\vphi)}.
$$ The lemma is proved.
\end{proof}

\begin{proposition}
\label{prop:potential}
Let $\varphi(t)$ be a Calabi flow on the K\"ahler surface $M$, with a uniform Sobolev constant bound $C_S$ on
$[0,T)$. Then
\beq
 \|\varphi(t)-\varphi_0\|_{L^\infty(M)}\leq
 C(C_S, C_D)\bigl(1+t\sqrt{\cC(\varphi_0)}\bigr),
 \qquad 0\le t<T,
 \label{eq:finite-time-potential-increment}
\eeq
where $C_D$ is the constant in
\lemref{lem:both-endpoint-L2}, and
$C=12C_S\max\{1,C_D\}$.
In particular, if $T<\infty$, the potential is uniformly bounded on $M\times[0,T)$.
\end{proposition}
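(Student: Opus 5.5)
The plan is to apply Lemma~\ref{lem:probability-Moser} twice, once to $f=\varphi(t)-\varphi_0$ and once to $f=\varphi_0-\varphi(t)$. Each time we choose the metric so that the Laplacian of $f$ has a one-sided bound by $2$. The $L^2$ norms that appear are then controlled by Lemma~\ref{lem:both-endpoint-L2}.

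\emph{Upper bound for $\varphi(t)-\varphi_0$.} Work with the metric $\omega_{\varphi(t)}$. Since $\omega_{\varphi(t)}=\omega_{\varphi_0}+i\partial\bar\partial(\varphi(t)-\varphi_0)$, we have
$$\Delta_{\varphi(t)}(\varphi(t)-\varphi_0)=\tr_{\omega_{\varphi(t)}}\bigl(\omega_{\varphi(t)}-\omega_{\varphi_0}\bigr)=2-\tr_{\omega_{\varphi(t)}}\omega_{\varphi_0}\le 2.$$
This is an upper bound on the Laplacian, so we apply the lemma to $f=\varphi_0-\varphi(t)$, which satisfies $\Delta_{\varphi(t)}f\ge-2$. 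With $a=2$ and $C_S(\varphi(t))\le C_S$, Lemma~\ref{lem:probability-Moser} gives
$$\|(\varphi_0-\varphi(t))_+\|_{L^\infty}\le 12C_S\bigl(1+\|\varphi(t)-\varphi_0\|_{L^2(\mu_t)}\bigr).$$

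\emph{Lower bound for $\varphi(t)-\varphi_0$.} The same computation with the roles exchanged gives $\Delta_{\varphi_0}(\varphi(t)-\varphi_0)\ge-2$. We apply the lemma with respect to the metric $\omega_{\varphi_0}$, whose Sobolev constant is also at most $C_S$ because $t=0$ lies in $[0,T)$. This gives
$$\|(\varphi(t)-\varphi_0)_+\|_{L^\infty}\le 12C_S\bigl(1+\|\varphi(t)-\varphi_0\|_{L^2(\mu_{\varphi_0})}\bigr).$$

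\emph{Combining.} $\|\varphi(t)-\varphi_0\|_{L^\infty}$ is the maximum of the two positive parts. Both $L^2$ norms are bounded by $C_D\,t\sqrt{\cC(\varphi_0)}$ via \eqref{eq:both-endpoint-L2}. Hence
$$\|\varphi(t)-\varphi_0\|_{L^\infty}\le 12C_S\bigl(1+C_D\,t\sqrt{\cC(\varphi_0)}\bigr)\le 12C_S\max\{1,C_D\}\bigl(1+t\sqrt{\cC(\varphi_0)}\bigr),$$
which is \eqref{eq:finite-time-potential-increment}. If $T<\infty$, the right-hand side is uniformly bounded on $[0,T)$. Since $\varphi_0$ is smooth on compact $M$, $\varphi(t)$ is then uniformly bounded on $M\times[0,T)$.

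The only delicate step is using the correct metric for each sign. The upper bound on the Laplacian holds only in the evolving metric, and the lower bound only in the initial metric. This is why the lemma must be applied at two different times, and why both endpoint $L^2$ norms in Lemma~\ref{lem:both-endpoint-L2} are needed.
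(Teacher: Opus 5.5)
Your proof is correct and follows the paper's argument: the one-sided bounds $\Delta_{\varphi_0}(\varphi(t)-\varphi_0)\ge-2$ and $\Delta_{\varphi(t)}(\varphi_0-\varphi(t))\ge-2$ are fed into Lemma~\ref{lem:probability-Moser} with $a=2$, and the two resulting $L^2$ norms are controlled by the two endpoint terms of Lemma~\ref{lem:both-endpoint-L2}. One cosmetic point: your headings ``upper'' and ``lower'' are swapped, since the first paragraph bounds $\sup_M(\varphi_0-\varphi(t))$ and hence gives the lower bound for $\varphi(t)-\varphi_0$, but this does not affect the argument.
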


\begin{proof}
We set $w(t)=\varphi(t)-\varphi_0$. Then
$
 \omega_{\varphi(t)}=\omega_{\varphi_0}+i\p\bar\p w(t),
$
which implies that
$$
 \begin{aligned}
 \Delta_{\varphi_0}w(t)
   =\operatorname{tr}_{\omega_{\varphi_0}}\omega_{\varphi(t)}-2\ge-2,\quad
 \Delta_{\varphi(t)}(-w(t))
   =\operatorname{tr}_{\omega_{\varphi(t)}}\omega_{\varphi_0}-2\ge-2.
 \end{aligned}
$$
By the uniform Sobolev bound and Lemma~\ref{lem:probability-Moser} with $a=2$, we have
$$
 \begin{aligned}
 \|w(t)_+\|_{L^\infty}
  &\le12C_S\bigl(1+\|w(t)\|_{L^2(\mu_{\varphi_0})}\bigr),\\
 \|w(t)_-\|_{L^\infty}
  &\le12C_S\bigl(1+\|w(t)\|_{L^2(\mu_t)}\bigr),
 \end{aligned}
$$
where $w_+=\max\{w,0\}$ and $w_-=\max\{-w,0\}$. Taking the maximum of these two estimates and using Lemma~\ref{lem:both-endpoint-L2} yields
 \begin{align*}
 \|w(t)\|_{L^\infty}
 &=\max\{\|w(t)_+\|_{L^\infty},\|w(t)_-\|_{L^\infty}\}\\
 &\le12C_S\left(1+
       \max\left\{\|w(t)\|_{L^2(\mu_{\varphi_0})},
                    \|w(t)\|_{L^2(\mu_t)}\right\}\right)\\
 &\le12C_S\left(1+C_Dt\sqrt{\cC(\varphi_0)}\right)\\
 &\le12C_S\max\{1,C_D\}
               \left(1+t\sqrt{\cC(\varphi_0)}\right).
 \end{align*}
This proves \eqref{eq:finite-time-potential-increment}. The final assertion follows immediately when $T<\infty$.
\end{proof}

\begin{remark}
Huang \cite[Theorem 1.1]{Huang} proved the finite time potential bounds for the Calabi flow under a uniform Sobolev bound in the toric setting.
\end{remark}

\subsection{A local harmonic map estimate}

We record a consequence of the epsilon regularity estimate for harmonic maps. See Lin \cite[proof of Lemma~1.5 and Section~5]{Lin}. We state it for smoothly converging domain metrics, as required by the localisation argument below.

\begin{lemma}
\label{lem:harmonic}
Let $U_i\Subset X$ exhaust a smooth real four-manifold and let
$h_i\to h$ smoothly locally on $X$. Let $(N,g_N)$ be a fixed compact
Riemannian manifold and let $$f_i:(U_i,h_i)\to(N,g_N)$$ be smooth harmonic maps. If
\[
 \int_K |df_i|_{h_i,g_N}^2\,dV_{h_i}\longrightarrow0
 \quad\text{for every compact }K\Subset X,
\]
then on every compact subset $$\sup_K|df_i|_{h_i,g_N}\to0.$$
\end{lemma}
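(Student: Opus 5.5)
The plan is to combine the $\varepsilon$-regularity theorem for harmonic maps in dimension four (stationarity and monotonicity are not needed here since the maps are smooth) with the smooth convergence of the domain metrics. The relevant statement, from Schoen's small-energy estimate as used by Lin, is the following. There are constants $\varepsilon_0>0$ and $C_0$, depending only on $(N,g_N)$, the dimension, and uniform bounds on the geometry of the domain metric in a fixed coordinate ball (ellipticity constants and $C^1$ bounds of the metric coefficients), such that the following holds. For a smooth harmonic map $f$ on a ball $B_r(x)$ with
\[
 r^{2-4}\int_{B_r(x)}|df|^2\,dV\le\varepsilon_0,
\]
one has
\[
 \sup_{B_{r/2}(x)}|df|^2\le C_0\, r^{-4}\int_{B_r(x)}|df|^2\,dV .
\]
In dimension four the scale-invariant quantity is $r^{-2}\int_{B_r}|df|^2$, and the interior estimate gives $r^2\sup_{B_{r/2}}|df|^2\le C_0 r^{-2}\int_{B_r}|df|^2$.

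First, fix a compact set $K\Subset X$. Cover $K$ by finitely many coordinate balls $B_{r}(x_k)$ whose doubles $B_{2r}(x_k)$ lie in a compact set $K'\Subset X$. Since $U_i$ exhausts $X$, we have $K'\subset U_i$ for all large $i$. Because $h_i\to h$ smoothly on $K'$, for large $i$ the metrics $h_i$ are uniformly equivalent to $h$ in these charts, with uniform $C^k$ bounds on their coefficients. Hence the $\varepsilon$-regularity constants can be chosen uniformly in $i$. This uniformity is the one point requiring care: the harmonic map equation in local coordinates,
\[
\Delta_{h_i}f^\alpha+h_i^{ab}\Gamma^\alpha_{\beta\gamma}(f)\,\p_af^\beta\p_bf^\gamma=0
\]
(after isometrically embedding $N$ in Euclidean space, the equation becomes $\Delta_{h_i}f=A(f)(df,df)$), has coefficients controlled uniformly. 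So the Bochner-formula argument behind the $\varepsilon$-regularity goes through with uniform constants. In particular, the Bochner inequality $\Delta_{h_i}|df|^2\ge -C(|df|^2+|df|^4)$ holds with $C$ depending on bounds for $\Ric(h_i)$ and $\Rm(g_N)$, and both are uniform.

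Second, apply the hypothesis with $K'$. We have $\int_{K'}|df_i|^2\,dV_{h_i}\to0$, so for large $i$ the normalized energy $r^{-2}\int_{B_{2r}(x_k)}|df_i|^2$ is below $\varepsilon_0$ for every $k$ simultaneously. The estimate then gives
\[
\sup_{B_r(x_k)}|df_i|^2\le C_0 r^{-4}\int_{K'}|df_i|^2\,dV_{h_i}\to0 .
\]
Taking the maximum over the finitely many $k$ yields $\sup_K|df_i|_{h_i,g_N}\to0$.

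The main obstacle is justifying the $\varepsilon$-regularity step uniformly. If needed, I would prove it directly via Schoen's trick. Choose the point maximizing $(r-\rho)^2|df|^2$, where $\rho$ is the distance from the center. Rescale so the maximum equals $1$. Then the Bochner inequality gives a subsolution $\Delta e\ge -Ce$ on a unit ball with $e\le 4$. The mean-value inequality, with uniform constants because the metrics are uniformly controlled, bounds $e$ at the center by a multiple of the small energy, which is a contradiction. From this the stated estimate follows.
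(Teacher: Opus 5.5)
Your argument is correct and is essentially the paper's. The paper likewise combines small-energy $\varepsilon$-regularity (with constants uniform over the smoothly converging $h_i$), the Bochner inequality and a mean-value estimate for $\Delta e\ge -Ce$, and then takes the supremum over balls of a fixed radius centred in $K$. The difference is that you quote Schoen's linear form $\sup_{B_{r/2}}|df|^2\le C_0r^{-4}\int_{B_r}|df|^2$ directly. The paper instead quotes Lin's bounded form $\sup_{B_1}|d\tilde f|^2\le C_0$ and derives the linear estimate itself, via Bochner and an $L^2\to L^1$ Moser iteration.

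One correction: your remark that monotonicity is not needed is wrong in real dimension four. In your Schoen-trick sketch, the mean-value inequality at the rescaled point only bounds $e(x_0)$ by the normalised energy $\lambda^{-2}\int_{B_\lambda(x_0)}|df|^2$ at the small scale $\lambda=e(x_0)^{-1/2}$. Passing from smallness at scale $r$ to smallness at scale $\lambda$ requires the (almost-)monotonicity formula. That formula is available because smooth harmonic maps are stationary, as the paper notes, and its constants are uniform in $i$ by the controlled geometry of $h_i$.
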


\begin{proof}
Let $K\Subset X$ be compact, and let $K'\Subset X$ be a compact neighbourhood with $K\subset\operatorname{int}K'$.
For all sufficiently large $i$, the set $K'$ lies in $U_i$. By the smooth convergence of $h_i$, we have uniform ellipticity, uniform bounds for the metric derivatives, and uniform control of the local geometry near $K'$. Hence there exists a fixed sufficiently small $r\in(0,1)$ such that, for every $p\in K$, the balls $B^{h_i}_{4r}(p)$ have compact closure in
$\operatorname{int}K'$ and lie in uniformly controlled coordinate
neighbourhoods.

\textbf{Step 1: Epsilon regularity.}
We use the small energy gradient estimate recalled in
\cite[p.~792, proof of Lemma~1.5]{Lin}, together with the extension to
Riemannian domain metrics in \cite[Section~5, pp.~825--826]{Lin}.
The following is a uniform local reformulation of that estimate. For a fixed $\Lambda\ge1$ and the fixed compact target $(N,g_N)$, there are constants $\varepsilon_0>0$ and $C_0<\infty$ such that any smooth metric $\widetilde h$ on the Euclidean ball
$B_2(0)\subset\mathbb R^4$ satisfying
\[
 \Lambda^{-1}\delta\le\widetilde h\le\Lambda\delta,
 \qquad
 \max_{a,b}\|\widetilde h_{ab}\|_{C^2(B_2(0))}\le\Lambda
\]
and any smooth harmonic map
$\widetilde f:(B_2(0),\widetilde h)\to(N,g_N)$ satisfy
\begin{align}\label{small energy gradient estimate Lin}
 \int_{B_2(0)}|d\widetilde f|_{\widetilde h,g_N}^2
                  \,dV_{\widetilde h}<\varepsilon_0
 \quad\Longrightarrow\quad
 \sup_{B_1(0)}|d\widetilde f|_{\widetilde h,g_N}^2\le C_0.
\end{align}
Here $\varepsilon_0$ and $C_0$ depend only on $\Lambda$ and $(N,g_N)$, and the domain dimension is four.
In our setting, smooth harmonic maps are stationary under compactly supported domain variations, so the stationarity hypothesis of the cited estimate is satisfied.
To apply this statement, we pull back $f_i$ to the tangent space. For $p\in K$, let $T_pX\simeq\mathbb R^4$ be an $h_i$-orthonormal identification. With this identification, set
\[
 \Phi_{i,p,r}(y)=\exp_p^{h_i}(2ry),
 \qquad
 \widetilde h_i=(2r)^{-2}\Phi_{i,p,r}^*h_i,
 \qquad
 \widetilde f_i=f_i\circ\Phi_{i,p,r}.
\]
Each $\Phi_{i,p,r}$ identifies $B_2(0)$ with $B^{h_i}_{4r}(p)$. By the smooth local convergence of $h_i$, the preceding coordinate bounds hold with a common $\Lambda$ for all sufficiently large $i$ and all $p\in K$. Since harmonicity is preserved under constant rescaling of the source metric, we may apply
\eqref{small energy gradient estimate Lin} to the harmonic map $$\widetilde f_i:(B_2(0),\widetilde h_i)\to(N,g_N).$$ Let $e_i=|df_i|_{h_i,g_N}^2$ denote the energy density of $f_i$.
In real dimension four,
\[
 \int_{B_2(0)}|d\widetilde f_i|_{\widetilde h_i,g_N}^2
                     \,dV_{\widetilde h_i}
 =(2r)^{-2}\int_{B^{h_i}_{4r}(p)}e_i\,dV_{h_i},
 \qquad
 |d\widetilde f_i|_{\widetilde h_i,g_N}^2
 =(2r)^2 e_i\circ\Phi_{i,p,r}.
\]
With $\varepsilon_*=\varepsilon_0/4$, we obtain from the preceding estimate
\begin{align}\label{small energy gradient estimate}
 r^{-2}\int_{B^{h_i}_{4r}(p)}e_i\,dV_{h_i}\le\varepsilon_*
 \quad\Longrightarrow\quad
 \sup_{B^{h_i}_{2r}(p)}e_i\le Cr^{-2}.
\end{align}
Indeed, $C=C_0/4$. Since $r$ is fixed and
$$\int_{K'}e_i\,dV_{h_i}\to0,$$ the smallness condition in \eqref{small energy gradient estimate} holds
simultaneously for every $p\in K$ when $i$ is sufficiently large.

\textbf{Step 2: Bochner formula.}
Let \(\{E_a\}\) be an orthonormal frame with respect to \(h\), and set $df_a=df(E_a)$ and $e=|df|_{h,g_N}^2$. By the Bochner formula of Eells and Sampson \cite[Section~8]{EellsSampson}, we have
\begin{align*}
 \frac12\Delta_h^{\R}e
 =|\nabla df|^2
   +\sum_{a,b}\Ric_h(E_a,E_b)\langle df_a,df_b\rangle_{g_N}
-\sum_{a,b}
       \langle\Rm^N(df_a,df_b)df_b,df_a\rangle_{g_N}.
\end{align*}
The second and third terms can be bounded in absolute value by $Ce$ and $Ce^2$, respectively, with constants depending on the Ricci curvature bounds for $h_i$ and the curvature bounds for $g_N$. Applying this formula to $f_i$, we obtain
\[
 \Delta^{\mathrm R}_{h_i} e_i
 \ge -C_0 e_i-C_1 e_i^2.
\]
These constants can be chosen uniformly on the fixed compact neighbourhood because $h_i$ converges smoothly and the target is fixed and compact. Using the preceding gradient bound \eqref{small energy gradient estimate} and $r<1$, we obtain
\begin{align}\label{Bochner PDE}
 \Delta^R_{h_i}e_i\ge -a_0\cdot r^{-2}\cdot e_i
 \quad\text{on }B^{h_i}_{2r}(p),
\end{align}
where $a_0\ge0$ is independent of $i$, $p$, and the sufficiently small radius $r$.

\textbf{Step 3: Local estimates.}
Using the uniform local bounds for $h_i$, we apply the local $L^2$ boundedness estimate for uniformly elliptic equations in divergence form to \eqref{Bochner PDE}. All constants below depend only on these bounds and $a_0$.
We set
\[
 \widehat h_i=r^{-2}h_i,
 \qquad w=e_i,
 \qquad B_s=B^{\widehat h_i}_s(p)=B^{h_i}_{sr}(p).
\]
Since
\[
 \Delta^R_{\widehat h_i}=r^2\Delta^R_{h_i},
 \qquad dV_{\widehat h_i}=r^{-4}dV_{h_i},
\]
we have
\begin{align}\label{Bochner PDE w}
 w\ge0,\qquad
 \Delta^R_{\widehat h_i}w\ge-a_0\cdot w
 \quad\text{on }B_2.
\end{align}
Let $1\le s<t\le3/2$. For each $x\in B_s$, set $R=\frac{t-s}{2}$. We apply
\cite[Theorem~8.17]{GilbargTrudinger} with $p=2$, $n=4$ to the equation \eqref{Bochner PDE w}:
\[
 w(x)\leq\sup_{B_R(x)}w
 \le CR^{-2}
       \left(\int_{B_{2R}(x)}w^2\,dV_{\widehat h_i}\right)^{1/2}.
\]
Since \(B_{2R}(x)\subset B_t(p)\), taking the supremum over \(x\in B_s(p)\) gives
\[
 \sup_{B_s}w
 \le C(t-s)^{-2}
       \left(\int_{B_t}w^2\,dV_{\widehat h_i}\right)^{1/2}.
\]
For convenience, set \[ A(s)=\sup_{B_s}w,
 \qquad E=\int_{B_2}w\,dV_{\widehat h_i}.
\]
Since $w\geq 0$,
\begin{align}\label{L2 local}
 \int_{B_t}w^2\,dV_{\widehat h_i}
 \le A(t)\int_{B_t}w\,dV_{\widehat h_i}
 \le A(t)E.
\end{align}
Using \eqref{L2 local} in the preceding supremum estimate and applying Young's inequality with $0<\theta<1$, we obtain
\[
 A(s)
 \le C(t-s)^{-2}A(t)^{1/2}E^{1/2}
 \le \theta A(t)+C_\theta(t-s)^{-4}E.
\]
For the iteration, let \[ s_j=\frac32-2^{-j-1},\qquad j=0,1,2,\ldots.
\]
Then $s_0=1$, $s_j\to3/2$, and
$s_{j+1}-s_j=2^{-j-2}$. Iteration gives
\[
 A(1)
 \le\theta^{m+1}A(s_{m+1})
    +C_\theta E\sum_{j=0}^{m}\theta^j2^{4j+8}
 \le\theta^{m+1}A(3/2)
    +C\cdot E\cdot \sum_{j=0}^{m}(16\theta)^j.
\]
For the fixed $i$, $p$, and $r$, smoothness of $w$ on a neighbourhood of $\overline{B_{3/2}}$ implies $A(3/2)<\infty$. Hence the first term tends to zero as $m\to\infty$. For $\theta=1/32$, the series in the second term converges, so
\[
 \sup_{B_1}w\le C\int_{B_2}w\,dV_{\widehat h_i}.
\]
Returning to $h_i$ and recalling that $w=e_i$, we conclude that
\begin{equation}
 \sup_{B^{h_i}_r(p)}e_i
 \le Cr^{-4}\int_{B^{h_i}_{2r}(p)}e_i\,dV_{h_i}
 \le Cr^{-4}\int_{B^{h_i}_{4r}(p)}e_i\,dV_{h_i}.
 \label{eq:harmonic-epsilon}
\end{equation}
The factor $r^{-4}$ comes from the volume rescaling in real dimension four. All constants depend only on the controlled local geometry and the fixed target. In particular, they are independent of the map and the centre $p\in K$ for all sufficiently large $i$. The radius $r>0$ was fixed before taking $i\to\infty$, and the fourfold balls lie in $K'$. Taking the supremum over their centres
$p\in K$ therefore gives
\[
 \sup_K e_i
 \le C_{K,K'}\int_{K'}e_i\,dV_{h_i}\longrightarrow0.
\]
In particular, for all sufficiently large $i$ we have
\[
 \sup_K|df_i|_{h_i,g_N}
 \le C_{K,K'}
 \left(\int_{K'}|df_i|_{h_i,g_N}^2\,dV_{h_i}\right)^{1/2}
 \longrightarrow0.
\]
\end{proof}

\subsection{Localisation of an ALE bubble using bounded potentials}

We first specify the compactification and pluripotential results used below, with the conventions
\[
 d^c_Jv=-\frac12\,dv\circ J,
 \qquad dd^c_Jv=i\partial_J\bar\partial_Jv,
 \qquad \Omega_h(\xi,\zeta)=h(J\xi,\zeta),\qquad \Omega_h(\xi,J\xi)=h(\xi,\xi).
\]

The following definition is from \cite[Definition~2.1]{HRS}.
\begin{definition}\label{ALE defn}
A complete K\"ahler $n$-manifold is asymptotically locally Euclidean (ALE) if
\begin{itemize}
\item outside a compact set, its finitely
many ends admit diffeomorphisms to
\[
 (\mathbb C^n\setminus\overline{B_R(0)})/\Gamma,
 \qquad \Gamma\subset U(n)\text{ finite and acting freely},
\]
\item in the lifted coordinates, for some $\tau>0$ and $0<\alpha<1$,
\[
 h-h_{\mathrm{Euc}},\qquad J-J_{\mathrm{Euc}}
 \in C^{2,\alpha}_{-\tau}.
\]
\end{itemize}
For a tensor $T$, this weighted condition means, componentwise,
\[
 \sum_{j=0}^2 |x|^j|\partial^jT(x)|
 +|x|^{2+\alpha}
 [\partial^2T]_{C^{0,\alpha}(B_{|x|/10}(x))}
 =O(|x|^{-\tau}).
\]
Here derivatives and norms are Euclidean. The H\"older seminorm compares the coordinate components on the indicated ball.
\end{definition}

By the compactification theorem of Hein, R\u{a}sdeaconu, and \c{S}uvaina \cite[Theorem~2.2]{HRS}, for $n\ge2$ we may compactify such a manifold $X$ to a compact K\"ahler orbifold $\overline X$ by adding a connected suborbifold divisor $D$. Moreover, $X$ has one end, and
\[
 X\cong\overline X\setminus D.
\]
We will also need a removable singularity theorem for plurisubharmonic functions.
\begin{definition}
A function on a coordinate domain is plurisubharmonic if it is upper semicontinuous and its restriction to every complex line is subharmonic, allowing the value $-\infty$.

A set is pluripolar if locally it is contained in the $-\infty$ locus of a plurisubharmonic function that is not identically
$-\infty$. For further details, see \cite[Chapter~I, Definitions~5.1 and~5.22]{DemaillyCADG}.
\end{definition}

We use the following removable singularity theorem \cite[Chapter~I, Theorem~5.24]{DemaillyCADG}. Let $A$ be a closed pluripolar subset of a complex manifold $Y$, and let $v$ be a plurisubharmonic function on $Y\setminus A$. Suppose that $v$ is locally bounded above near $A$. Then $v$ extends uniquely to a plurisubharmonic function on $Y$.

\begin{lemma}[Liouville property]
\label{lem:Liouville}
Let $(X,h,J)$ be a connected ALE K\"ahler manifold of complex dimension $n\ge2$. Then every globally bounded plurisubharmonic function on $X$ is constant.
\end{lemma}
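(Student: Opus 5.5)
We plan to use the compactification $X\cong\overline X\setminus D$ provided by \cite[Theorem~2.2]{HRS} and then apply the removable singularity theorem for plurisubharmonic functions. Let $v$ be a bounded plurisubharmonic function on $X$, with $|v|\le M_0$. The compactification $\overline X$ is a compact, connected, complex orbifold (connected because $X$ is connected and $D$ is a proper analytic subset), and $D$ is a connected suborbifold divisor. The aim is to extend $v$ across $D$ to a plurisubharmonic function on all of $\overline X$, and then use the maximum principle on a compact connected space.

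\textbf{Step 1: local extension across $D$.} Take a point $q\in D$ and an orbifold chart $\pi:\widetilde U\to U=\widetilde U/G$ around $q$, where $\widetilde U\subset\mathbb C^n$ is a ball and $G$ is finite. The set $\widetilde D=\pi^{-1}(D\cap U)$ is a complex hypersurface in $\widetilde U$. It is therefore locally the zero set of a holomorphic function $s$, so it is contained in $\{\log|s|=-\infty\}$. Since $\log|s|$ is plurisubharmonic and not identically $-\infty$, the set $\widetilde D$ is closed and pluripolar. The pullback $\widetilde v=v\circ\pi$ is plurisubharmonic on $\widetilde U\setminus\widetilde D$, because $\pi$ is holomorphic there; where $G$ acts freely, $\pi$ is a local biholomorphism. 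It is also bounded above by $M_0$. By \cite[Chapter~I, Theorem~5.24]{DemaillyCADG}, $\widetilde v$ extends uniquely to a plurisubharmonic function on $\widetilde U$. By uniqueness the extension is $G$-invariant, so it descends to $U$. These local extensions agree on overlaps, again by uniqueness, and so they glue to a function $\bar v$ on $\overline X$ that is plurisubharmonic in every orbifold chart and satisfies $\bar v\le M_0$.

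\textbf{Step 2: maximum principle.} The function $\bar v$ is upper semicontinuous on the compact space $\overline X$, so it attains its maximum at some point $q_0$. We pull back to an orbifold chart at $q_0$, which is a ball in $\mathbb C^n$ with $\bar v\circ\pi$ plurisubharmonic there. The strong maximum principle for plurisubharmonic functions, restricted to complex lines (which are subharmonic), shows that $\bar v\circ\pi$ is constant near a preimage of $q_0$. Hence the set where $\bar v$ equals its maximum is open. It is also closed, by upper semicontinuity. Since $\overline X$ is connected, $\bar v$ is constant, and therefore so is $v=\bar v|_X$.

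\textbf{Main obstacle.} The delicate step is the orbifold bookkeeping in Step 1. We must check that $D$ lifts to an analytic hypersurface in the uniformising charts, which follows from its being a suborbifold divisor. We must also check that plurisubharmonicity on $X$ pulls back correctly near $D$, where the quotient map may be branched only along $D$ or the singular points. Away from these loci $\pi$ is a local biholomorphism, and the branch set is itself analytic and hence pluripolar. So we may first extend $\widetilde v$ across the union of $\widetilde D$ and the branch set, again by the removable singularity theorem; this covers every remaining case. The hypothesis $n\ge2$ enters only through the validity of the compactification theorem \cite[Theorem~2.2]{HRS}.
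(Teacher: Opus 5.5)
Your proposal is correct and follows essentially the same route as the paper: compactify via \cite[Theorem~2.2]{HRS}, extend across the closed pluripolar lifted divisor in each uniformising chart by Demailly's removable singularity theorem, descend and glue by uniqueness, and conclude with the strong maximum principle on the compact connected orbifold. Your extra treatment of branch loci off $D$ is unnecessary. Points of $X$ have trivial isotropy, and in any case pulling back a plurisubharmonic function by a holomorphic map keeps it plurisubharmonic. The extra step is harmless, though.
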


\begin{proof}
By the compactification theorem, we identify $X$ biholomorphically with
$\overline X\setminus D$.
Let $f$ be a bounded plurisubharmonic function with $|f|\le B_f$. At a point of $D$, we choose a local covering chart
\[
 \pi:\widetilde W\longrightarrow W=\widetilde W/H,
 \qquad \widetilde D=\pi^{-1}(D\cap W),
\]
where $\widetilde W$ is a complex manifold and $H$ is finite. The lift $\widetilde f=f\circ\pi$ is plurisubharmonic on
$\widetilde W\setminus\widetilde D$ and satisfies the boundedness condition
$|\widetilde f|\le B_f$ there.
After the chart is shrunk if necessary, the hypersurface $\widetilde D$ has a nonzero holomorphic defining function $a$. Since
$\log|a|$ is plurisubharmonic and equals $-\infty$ on
$\widetilde D$, this hypersurface is pluripolar.
It is closed in the chart, and $\widetilde f$ is locally bounded above near it. Thus we have verified all three hypotheses of the removable singularity theorem.
More explicitly, the extension is given by
\[
 \widetilde f^{\mathrm{ext}}(z)=
 \begin{cases}
  \widetilde f(z),&z\notin\widetilde D,\\
  \displaystyle\limsup_{\substack{w\to z, w\notin\widetilde D}}
                    \widetilde f(w),&z\in\widetilde D.
 \end{cases}
\]
The same bounds hold: $-B_f\le\widetilde f^{\mathrm{ext}}\le B_f$. For $\gamma\in H$, both $\widetilde f^{\mathrm{ext}}$ and
$\widetilde f^{\mathrm{ext}}\circ\gamma$ extend $\widetilde f$,
so they agree by uniqueness. Hence the extension descends to
$W$.
On overlaps, the lifts to a common refinement are extensions of the same function, so they again agree by uniqueness. Therefore, we obtain a globally defined bounded plurisubharmonic function
$\overline f$ on the orbifold $\overline X$.
By compactness and upper semicontinuity, $\overline f$ attains its maximum at some point. Its lift to a connected uniformising chart is
subharmonic and attains an interior maximum, so it is constant on that chart by the strong maximum principle. Therefore the set where
$\overline f$ attains its maximum is open. It is also closed by
upper semicontinuity. The space $\overline X$ is connected because it is the closure of the connected dense subset $X$. Thus the maximum set is all of $\overline X$, and $f$ is constant on $X$.
\end{proof}

\begin{lemma}
\label{lem:localisation}
Let $(M,J,g,\omega)$ be a fixed closed K\"ahler surface. Let $\omega_i=\omega+dd^c\psi_i>0$, let $g_i$ be the associated K\"ahler metric, and suppose that
\[
 \sup_M\psi_i=0,\qquad \|\psi_i\|_\infty\le B.
\]
Let $U_i\Subset X$ exhaust a connected smooth real four-manifold $X$. Suppose there are embeddings $F_i$ and constants $Q_i$:
\[F_i:U_i\to M,
 \qquad Q_i>0,\qquad Q_i\longrightarrow\infty\] such that
\[
 h_i=Q_i\cdot F_i^*g_i\longrightarrow h,
 \qquad J_i=F_i^*J\longrightarrow J_\infty \text{ smoothly locally,}
\]
and suppose that $(X,h,J_\infty)$ is an ALE K\"ahler surface satisfying the hypotheses of Lemma~\ref{lem:Liouville}.

Then every connected compact smooth domain $U\Subset X$ embeds smoothly into $\mathbb R^4$. More precisely, after passing to a subsequence,
\[
 \operatorname{diam}_gF_i(U)\longrightarrow0.
\]
\end{lemma}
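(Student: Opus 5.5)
Following the outline in the introduction, the plan is to show that the pullbacks of the fixed form $\omega$ carry vanishing energy on compact subsets of $X$. We then apply Lemma~\ref{lem:harmonic} to the holomorphic, hence harmonic, maps $F_i$.

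\textbf{Step 1: a limiting potential.} Set $v_i=F_i^*\psi_i$ on $U_i$. On $U_i$ we have
\[
 F_i^*\omega+dd^c_{J_i}v_i=F_i^*\omega_i=Q_i^{-1}\Omega_{h_i}.
\]
Hence $dd^c_{J_i}v_i\ge -F_i^*\omega$, and $v_i$ is uniformly bounded by $B$.
\begin{itemize}
\item First we bound $\int_K F_i^*\omega\wedge\Omega_{h_i}$ for compact $K\Subset X$. Take a cutoff $\chi$ supported in $K'$ and write $F_i^*\omega=Q_i^{-1}\Omega_{h_i}-dd^c_{J_i}v_i$. Integrating by parts moves $dd^c$ onto $\chi$, so
\[
 \int\chi\,F_i^*\omega\wedge\Omega_{h_i}\le Q_i^{-1}\int_{K'}\Omega_{h_i}^2+B\int|dd^c_{J_i}\chi\wedge\Omega_{h_i}|\le C(K').
\]
Here we use that $\Omega_{h_i}$ is closed and that $h_i,J_i$ converge smoothly. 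Since $F_i^*\omega\ge0$ is a $(1,1)$ form, this controls its trace. We deduce that $F_i^*\omega$ is bounded in $L^1_{loc}$.
\item Next we pass to a subsequence. We have $\Delta_{h_i}v_i\ge -\operatorname{tr}_{h_i}F_i^*\omega$ with bounded $L^1$ mass and $|v_i|\le B$. So $v_i$ converges in $L^1_{loc}$ to a bounded function $v$, and $F_i^*\omega$ converges weakly to a closed positive current $T$. In the limit the identity becomes $T+dd^c_{J_\infty}v=0$, because $Q_i^{-1}\Omega_{h_i}\to0$.
\item Therefore $dd^c_{J_\infty}(-v)=T\ge0$. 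Thus $-v$ (after upper semicontinuous regularisation) is a bounded plurisubharmonic function on $X$. By Lemma~\ref{lem:Liouville} it is constant, and hence $T=0$.
\end{itemize}

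\textbf{Step 2: vanishing energy.} Since $F_i$ is $(J_i,J)$-holomorphic, its energy density satisfies $|dF_i|^2_{h_i,g}\,dV_{h_i}=c\,F_i^*\omega\wedge\Omega_{h_i}$ up to a dimensional constant. Weak convergence $F_i^*\omega\rightharpoonup0$ against the smoothly convergent forms $\chi\Omega_{h_i}$, with $\chi\ge0$, then gives
\[
 \int_K|dF_i|^2_{h_i,g}\,dV_{h_i}\to0
\]
for every compact $K\Subset X$. The maps $F_i:(U_i,J_i,h_i)\to(M,J,g)$ are holomorphic maps between K\"ahler manifolds, hence harmonic. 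Lemma~\ref{lem:harmonic} therefore yields $\sup_K|dF_i|\to0$.

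\textbf{Step 3: conclusion.} Let $U$ be connected with compact closure. Choose a compact $K\supset U$ with finite $h$-intrinsic diameter $L$. Any two points of $U$ are joined by a curve in $K$ of $h_i$-length at most $2L$ for large $i$. The image of this curve has $g$-length at most $2L\sup_K|dF_i|\to0$, so $\operatorname{diam}_gF_i(U)\to0$. In particular, for large $i$ the set $F_i(U)$ lies in a small coordinate ball of $M$ diffeomorphic to a ball in $\mathbb R^4$. Since $F_i$ is an embedding, $U$ embeds smoothly into $\mathbb R^4$.

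The main obstacle is Step 1. There are three delicate points: passing the identity to the limit when the complex structures $J_i$ vary, making the weak limit $v$ genuinely plurisubharmonic with respect to $J_\infty$, and justifying the uniform $L^1_{loc}$ bound on $F_i^*\omega$. We will first work in coordinates where $J_i\to J_\infty$ smoothly and use the stability of weak plurisubharmonicity under $L^1$ limits when the operators $dd^c_{J_i}$ converge smoothly.
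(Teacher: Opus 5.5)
Your proposal is correct and follows essentially the same route as the paper. Both arguments pull back the bounded potentials, show that the weak limit of $-\psi_i\circ F_i$ is a bounded $J_\infty$-plurisubharmonic function (hence constant by Lemma~\ref{lem:Liouville}), deduce that $\int_K F_i^*\omega\wedge\Omega_{h_i}$ and hence the harmonic-map energy tends to zero, and finish with Lemma~\ref{lem:harmonic} and the intrinsic-diameter estimate. The only difference is bookkeeping: you extract the weak limit current of $F_i^*\omega$ using your local mass bound, whereas the paper gets positivity directly from $dd^c_{J_i}u_i\ge -Q_i^{-1}\Omega_i$ and then reruns your integration-by-parts identity with $u_i\rightharpoonup c$, so it never needs that limit current.
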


\begin{proof}
Since $F_i$ are embeddings,
$dF_i$ is invertible at every point of $U_i$, and
\[
 J_i=(dF_i)^{-1}\circ J\circ dF_i,
 \qquad dF_i\circ J_i=J\circ dF_i.
\]
In particular, $F_i:(U_i,J_i)\to(M,J)$ is holomorphic. Let $\Omega_\infty$ be the K\"ahler form of $(h,J_\infty)$. For the K\"ahler form of $(h_i,J_i)$, we have
\[
 \Omega_i=Q_i\cdot F_i^*\omega_i \longrightarrow \Omega_\infty\text{ smoothly on compact subsets}.
\]
Let $\alpha_i=F_i^*\omega$. Then
 \begin{align}
  \label{eq:pullback-potential positivity}
 \alpha_i(\xi,J_i\xi)
 =\omega(dF_i\xi,JdF_i\xi)
 =g(dF_i\xi,dF_i\xi)>0,\qquad \xi\ne0.
 \end{align}
Eells and Sampson \cite[Section~2(C)]{EellsSampson} showed that a holomorphic map between K\"ahler manifolds is harmonic for their associated real Riemannian metrics. Equivalently, its tension field
$\operatorname{tr}(\nabla dF)$ vanishes.
In our application the source is $(U_i,J_i,h_i)$ and the target is the fixed $(M,J,g)$. Thus $F_i$ are harmonic with the fixed target metric $g$, not merely with the varying metric $g_i$. At each point, we choose an $h_i$-orthonormal real unitary frame $(E_1,J_iE_1,E_2,J_iE_2)$ diagonalising the positive
$(1,1)$-form $\alpha_i$. If its two eigenvalues relative to
$\Omega_i$ are $\lambda_1,\lambda_2>0$, then
\[
 \alpha_i\wedge\Omega_i
       =(\lambda_1+\lambda_2)\frac{\Omega_i^2}{2}
       =(\lambda_1+\lambda_2)\,dV_{h_i}.
\]
In this frame, the energy density $e_i=|dF_i|_{h_i,g}^2$ is given by
 \begin{align*}
 e_i
 =\sum_{a=1}^2\bigl(|dF_i(E_a)|_g^2
                       +|dF_i(J_iE_a)|_g^2\bigr)
 =2\sum_{a=1}^2g(dF_iE_a,dF_iE_a)
  =2\sum_{a=1}^2\alpha_i(E_a,J_iE_a)
  =2(\lambda_1+\lambda_2).
 \end{align*}
Hence
\[
 |dF_i|_{h_i,g}^2\,dV_{h_i}=2\alpha_i\wedge\Omega_i.
\]
For the pulled-back potential $u_i=-\psi_i\circ F_i$, the normalisation of $\psi_i$ gives
\begin{align}\label{eq:pullback-potential bounded}
0\le u_i\le B.
\end{align}
After passing to a subsequence, we obtain the weak-* convergence
 \begin{align}
  \label{eq:pullback-potential weak star convergence}
 \int_Xu_i\cdot\zeta\cdot dV_h\longrightarrow\int_Xu\cdot\zeta\cdot dV_h
 \quad\text{for every compactly supported }\zeta\in L^1(X,dV_h).
 \end{align}
Moreover, $0\le u\le B$ almost everywhere. Since $F_i$ is holomorphic, we compute using \eqref{eq:pullback-potential positivity} that
 \begin{align}
  \label{eq:pullback-potential}
 dd^c_{J_i}u_i
 =-F_i^*(dd^c_J\psi_i)
  =-F_i^*(\omega_i-\omega)
 =-Q_i^{-1}\Omega_i+\alpha_i
 \geq -Q_i^{-1}\Omega_i.
 \end{align}
To complete the proof, it suffices first to show that $u$ has a bounded plurisubharmonic representative, which must then be constant by Lemma~\ref{lem:Liouville}. It then follows that the local energies of the harmonic maps $F_i$ tend to zero. Finally, by Lemma~\ref{lem:harmonic}, we obtain the required local control of their images.

\textbf{Step 1: Positivity of $dd^c_{J_\infty}u$.}
Let $W\Subset X$ be a coordinate chart and let $\xi$ be a smooth real vector field on $W$. We write
\[
 L_i^\xi v=(dd^c_{J_i}v)(\xi,J_i\xi),
 \qquad
 L_\infty^\xi v=(dd^c_{J_\infty}v)(\xi,J_\infty\xi).
\]
\begin{claim}For every smooth real vector field $\xi$ and every nonnegative $\chi\in C_c^\infty(W)$, we have
\begin{align}\label{Linfty positive}
 \langle L_\infty^\xi u,\chi\rangle \geq 0.
\end{align}
\end{claim}
\begin{proof}
We write
$J(\partial_b)=J^a{}_b\partial_a$. Then
\[
 d^c_Jv=-\frac12J^a{}_b\,\partial_av\,dx^b,
 \qquad
 dd^c_Jv=-\frac12\partial_b(J^a{}_c\,\partial_av)
                         \,dx^b\wedge dx^c.
\]
Thus we can express $L_i^\xi$ and its formal adjoint as
\[
 L_i^\xi v=a_i^{ab}\partial_a\partial_bv+b_i^a\partial_av,\quad
  (L_i^\xi)^*\chi
 =m^{-1}\partial_a\partial_b(m\cdot a_i^{ab}\chi)
  -m^{-1}\partial_a(m\cdot b_i^a\chi),
\]
where $dV_h=m(x)\,dx$ with a fixed smooth positive density $m$. Note that the coefficients depend smoothly on $J_i$, its first derivatives, and $\xi$. By the assumed smooth convergence of $J_i$, we obtain smooth convergence of the coefficients of $L_i^\xi$ and its adjoint on compact subsets. In particular,
\begin{align}\label{eq:pullback-potential adjoint convergence}
 L_i^\xi v \to L_\infty^\xi v,\quad
(L_i^\xi)^*\chi\to(L_\infty^\xi)^*\chi \text{ smoothly locally}.
\end{align}
The difference of the pairings is
 \begin{align*}
 \int_Xu_i(L_i^\xi)^*\chi\,dV_h
       -\int_Xu(L_\infty^\xi)^*\chi\,dV_h
=\int_Xu_i\big[(L_i^\xi)^*-(L_\infty^\xi)^*\big]
                     \chi\,dV_h
       +\int_X(u_i-u)(L_\infty^\xi)^*\chi\,dV_h.
 \end{align*}
By the uniform bound \eqref{eq:pullback-potential bounded} and the convergence \eqref{eq:pullback-potential adjoint convergence}, the first term tends to zero. For the second term, we use the weak-* convergence \eqref{eq:pullback-potential weak star convergence}. Hence
\[
 \langle L_\infty^\xi u,\chi\rangle
 =\lim_{i\to\infty}\int_X\chi L_i^\xi u_i\,dV_h.
\]
Using \eqref{eq:pullback-potential} and $Q_i\rightarrow\infty$, we have
\[
 \int_X\chi\cdot L_i^\xi u_i\,dV_h
 \ge-Q_i^{-1}\int_X\chi\cdot \Omega_i(\xi,J_i\xi)\,dV_h
 =-Q_i^{-1}\int_X\chi \cdot h_i(\xi,\xi)\,dV_h\longrightarrow0.
\]
This proves \eqref{Linfty positive}.
\end{proof}
Furthermore, in a
$J_\infty$-holomorphic chart, we write
$
 \xi=\sum_{j=1}^2
       \left(\zeta^j\partial_{z_j}
+\overline{\zeta^j}\partial_{\bar z_j}\right).
$
By \eqref{Linfty positive}, we have
\[ 0\leq L_\infty^\xi u=
(dd^c_{J_\infty}u)(\xi,J_\infty\xi) =2\sum_{j,k=1}^2\zeta^j\overline{\zeta^k}
                         \partial_j\partial_{\bar k}u
\] in the sense of distributions.

\textbf{Step 2: Constancy of the plurisubharmonic representative of $u$.}
We apply the theorem of Demailly \cite[Chapter~I, Theorem~5.8]{DemaillyCADG} to \(u\), using the positivity of its directional complex Hessians established in \eqref{Linfty positive}. Thus there exists a unique locally integrable plurisubharmonic function $\tilde u$ representing $u$ such that $\tilde u=u$ almost everywhere and
\begin{align}  \label{eq:pullback-potential weak star convergence psh}
\int_W\tilde u\cdot \chi\cdot dV_h=\int_Wu\cdot  \chi\cdot dV_h,\quad \forall \chi\in C_c^\infty(W).
\end{align}
As in the proof of that theorem, we obtain $\tilde u$ as the decreasing limit of the regularisations $u*\rho_\varepsilon$, which satisfy
\[
 0\le u*\rho_\varepsilon\le B,
 \qquad dd^c(u*\rho_\varepsilon)
               =(dd^cu)*\rho_\varepsilon\ge0
\]
where $\rho_\varepsilon$ is a standard nonnegative radial smoothing kernel of integral one in a holomorphic chart. Hence $0\le\tilde u\le B$ everywhere.
By Lemma~\ref{lem:Liouville}, we conclude that
\[
\tilde u\equiv c\quad\text{on }X,\qquad 0\le c\le B.
\]

\textbf{Step 3: Vanishing of the harmonic-map energy.}
\begin{claim}
\begin{equation}
 \int_K\alpha_i\wedge\Omega_i\longrightarrow0
 \qquad\text{for every compact }K\Subset X.
 \label{eq:vanishing-current-mass}
\end{equation}
\end{claim}
\begin{proof}
Let $\chi\in C_c^\infty(X)$. For all large $i$, its support lies in $U_i$. Wedging~\eqref{eq:pullback-potential} with $\chi\Omega_i$ and integrating by parts, we obtain
\begin{equation}
 \int_X\chi\,\alpha_i\wedge\Omega_i
 = \int_X\chi\,dd^c_{J_i}u_i\wedge\Omega_i
        +Q_i^{-1}\int_X\chi\Omega_i^2
 =\int_Xu_i\,dd^c_{J_i}\chi\wedge\Omega_i
        +Q_i^{-1}\int_X\chi\Omega_i^2.
 \label{eq:current-cutoff}
\end{equation}
We write
\[
 dd^c_{J_i}\chi\wedge\Omega_i=b_i\,dV_h,
 \qquad dd^c_{J_\infty}\chi\wedge\Omega_\infty=b_\infty\,dV_h.
\]
Then $b_i\to b_\infty$ smoothly, with common compact support, and
\[
 \begin{aligned}
 \int_Xu_i b_i\,dV_h-c\int_Xb_\infty\,dV_h
 &=\int_Xu_i(b_i-b_\infty)\,dV_h
      +\int_X(u_i-c)b_\infty\,dV_h\longrightarrow0.
 \end{aligned}
\]
By the uniform bound \eqref{eq:pullback-potential bounded}, the first term tends to zero. For the second term, we use the weak-* convergence \eqref{eq:pullback-potential weak star convergence} and the identification \eqref{eq:pullback-potential weak star convergence psh} of $u$ with its constant plurisubharmonic representative. Since $d\Omega_\infty=0$, it follows from Stokes' theorem for compactly supported forms that
\[
\int_Xb_\infty\,dV_h= \int_Xdd^c_{J_\infty}\chi\wedge\Omega_\infty
 =\int_Xd\big(d^c_{J_\infty}\chi\wedge\Omega_\infty\big)=0.
\]
Also, $Q_i^{-1}\int_X\chi\Omega_i^2\to0$ by smooth local convergence and $Q_i\to\infty$. Consequently, \eqref{eq:current-cutoff} becomes
\[
 \int_X\chi\cdot \alpha_i\wedge\Omega_i\longrightarrow0.
\]
For fixed compact $K\Subset X$, let $\chi\in C_c^\infty(X)$ satisfy $\chi\ge0$ and $\chi\ge1$ on $K$. By \eqref{eq:pullback-potential positivity}, we have
\[
 0\le\int_K\alpha_i\wedge\Omega_i
       \le\int_X\chi\cdot \alpha_i\wedge\Omega_i\longrightarrow0.
\]
This proves the claim.
\end{proof}

\textbf{Step 4: Local control of the image.}
By \eqref{eq:vanishing-current-mass} and the identity $|dF_i|_{h_i,g}^2\,dV_{h_i}=2\alpha_i\wedge\Omega_i$, we have the vanishing local energy required in Lemma~\ref{lem:harmonic}. Since $h_i\to h$ smoothly locally, the domains exhaust $X$, and $(M,g)$ is fixed and compact, it follows from that lemma that
\begin{equation}
 \sup_K|dF_i|_{h_i,g}\longrightarrow0,
 \qquad K\Subset X.
 \label{eq:vanishing-differentials}
\end{equation}
For a connected compact smooth domain $U\Subset X$, we use the intrinsic path distance
\[
 d^{\mathrm{intr}}_{h_i,U}(x,y)
 =\inf_{\substack{\gamma\subset U\\\gamma(0)=x,\ \gamma(1)=y}}
          \int_0^1|\dot\gamma(t)|_{h_i}\,dt,
\]
where the curves are piecewise smooth. This is the distance using paths constrained to $U$, rather than the ambient distance on $X$. Its diameter for $h$ is finite. Indeed, finitely many coordinate balls and boundary half-balls cover $U$ and give uniform local path length bounds. By connectedness, there are chains of these sets joining any two points.
Smooth convergence on a compact neighbourhood of $U$ implies, for large $i$,
\[
 \tfrac12h\le h_i\le2h,
 \qquad
 \operatorname{diam}^{\mathrm{intr}}_{h_i}(U)
 \le\sqrt2\,\operatorname{diam}^{\mathrm{intr}}_h(U)<\infty.
\]
For any path $\gamma\subset U$ from $x$ to $y$, the operator norm of $dF_i$ is at most its Hilbert--Schmidt norm, so
 \begin{align*}
 d_g(F_i(x),F_i(y))
 \le\operatorname{Length}_g(F_i\circ\gamma)
 \le\sup_U|dF_i|_{h_i,g}\cdot
                  \operatorname{Length}_{h_i}(\gamma).
 \end{align*}
We take the infimum over paths and then the supremum over their endpoints, and use \eqref{eq:vanishing-differentials} to obtain
 \begin{align}\label{eq:bound image of U}
 \operatorname{diam}_gF_i(U)
 \le\sup_U|dF_i|_{h_i,g}\cdot
              \operatorname{diam}^{\mathrm{intr}}_{h_i}(U)
 \longrightarrow0
 \end{align}
Let $x_0\in U$ be fixed. By compactness of $M$, we may pass to a subsequence such that $$F_i(x_0)\to q\in M.$$ There are a coordinate neighbourhood $W_q$ of $q$ and $\varepsilon>0$ such that
$B^g_\varepsilon(q)\subset W_q$. For large $i$, this convergence and \eqref{eq:bound image of U} imply that
\[
 d_g(F_i(x_0),q)<\varepsilon/2,
 \qquad\operatorname{diam}_gF_i(U)<\varepsilon/2.
\]
By the triangle inequality, $F_i(U)\subset W_q$. Composing the smooth embedding $F_i|_U$ with the coordinate chart
$W_q\to\mathbb R^4$ proves the required embedding.
\end{proof}

\subsection{The topological obstruction}

The preceding localisation argument gives an embedding of a compact core of the bubble into a coordinate ball, and hence into $S^4$. We now explain the obstruction to such an embedding. The integral homology of the boundary is essential: the vanishing of $b_2$ alone will give Ricci-flatness, but not flatness.

\begin{lemma}
\label{lem:topology}
Let $(X,h,J)$ be a connected complete smooth scalar-flat K\"ahler ALE four-manifold of positive order. Let $U$ be a connected compact smooth core such that
\begin{enumerate}
\item The exterior of $U$ has the product structure
\[
 X\setminus\operatorname{int}U\cong Y\times[0,\infty),
 \qquad Y=\partial U=S^3/\Gamma.
\]
\item
$U$ embeds smoothly in $S^4$.
\end{enumerate}
Then $h$ is flat.
\end{lemma}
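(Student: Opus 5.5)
The plan is to extract two topological facts from the embedding $U\hookrightarrow S^4$, namely $H_2(U;\mathbb Z)=0$ and a splitting of $H_1(Y;\mathbb Z)=\Gamma^{ab}$, and then feed them into the classification theory of scalar-flat Kähler ALE surfaces.

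\textbf{Step 1 (topology).} Write $S^4=U\cup_Y V$ with $V=S^4\setminus\operatorname{int}U$. Mayer--Vietoris, together with $\tilde H_*(S^4)=0$ in degrees $1,2,3$, gives $H_k(Y)\cong H_k(U)\oplus H_k(V)$ for $k=1,2$. Since $Y=S^3/\Gamma$ is a rational homology sphere, $H_2(Y)\cong H^1(Y)=0$, and therefore $H_2(U)=0$. Alexander duality gives $H_1(V)\cong H^2(U)$, and by universal coefficients $H^2(U)\cong \operatorname{Hom}(H_2U,\mathbb Z)\oplus\operatorname{Ext}(H_1U,\mathbb Z)\cong \operatorname{Ext}(H_1U)$, which is (non-canonically) isomorphic to the torsion group $H_1(U)$. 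Hence
\[
H_1(Y)\cong H_1(U)\oplus H_1(U)\cong A\oplus A
\]
for a finite abelian group $A$. In particular $|\Gamma^{ab}|$ is a perfect square.

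\textbf{Step 2 (Ricci-flatness).} By the product structure, $X$ deformation retracts onto $U$, so $b_2(X)=0$. For a scalar-flat Kähler ALE surface, the Ricci form is an $L^2$ harmonic anti-self-dual form decaying at order $>2$, hence it represents a compactly supported class. Since $H^2_c(X;\mathbb R)\cong H_2(X;\mathbb R)=0$, the Ricci form vanishes (here I would use the Hodge theory of Bando--Kasue--Nakajima \cite{BKN}), so $h$ is Ricci-flat. A Ricci-flat Kähler ALE surface is hyperkähler after passing to the holonomy cover, and by Kronheimer's classification its group is $\Gamma\subset SU(2)$ and $X$ is diffeomorphic to the minimal resolution of $\mathbb C^2/\Gamma$. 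Using Wright \cite{Wright} and Šuvaina \cite{Suvaina}, the Ricci-flat Kähler ALE surfaces with possibly non-$SU(2)$ groups (free $\mathbb Z_2$ quotients and similar) still have $X$ diffeomorphic to such a resolution or a finite quotient of one, with $b_2$ equal to a known positive number unless $\Gamma$ is trivial (in which case $X$ is $\mathbb C^2$ and flat). So $b_2=0$ forces the resolution to be empty, i.e.\ $\Gamma=\{1\}$, or leaves the exceptional quotient cases.

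\textbf{Step 3 (main obstacle: exceptional quotients).} These are the cases where $b_2=0$ yet $h$ is not flat, such as $\mathbb Z_2$-quotients of Eguchi--Hanson and the Šuvaina examples; this is exactly why the remark preceding the lemma says $b_2$ alone is insufficient. For each case in Šuvaina's list, I would compute $\Gamma^{ab}=H_1(Y)$ and $H_1(U)$ (for Eguchi--Hanson$/\mathbb Z_2$, $U$ retracts to $\mathbb{RP}^2$, so $H_1(U)=\mathbb Z_2$, while $Y=S^3/\Gamma$ with $|\Gamma|=4$) and check that the splitting $H_1(Y)\cong H_1(U)\oplus H_1(U)$ fails. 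A shortcut for $\mathbb{RP}^2$ cores would be the Whitney--Massey normal Euler number obstruction, but a uniform check against Step 1 is cleaner. I expect matching every entry in the classification against this homological constraint to be the delicate part. Once all such cases are excluded, $\Gamma$ is trivial, $X\cong\mathbb C^2$, and a Ricci-flat ALE metric asymptotic to the Euclidean metric is flat by Bishop--Gromov equality.
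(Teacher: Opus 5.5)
Your overall strategy is the paper's: derive $H_1(Y)\cong A\oplus A$ with $A=H_1(U)$ from $U\subset S^4$, get Ricci-flatness from $b_2(X)=0$, then use \v{S}uvaina's classification. Steps 1 and 2 are sound. Using Alexander duality to get $H_1(V)\cong H^2(U)$ directly is a tidy substitute for the paper's route through $\ker i_*\cong H_2(U,Y)\cong H^2(U)$.

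The gap is Step~3. This is exactly where the proof has to close, and you leave it as an unexecuted entry-by-entry check that you expect to be delicate. The missing input is one structural fact from the non-simply-connected part of \v{S}uvaina's classification, which the paper uses. Such a nonflat Ricci-flat K\"ahler ALE surface has $\pi_1(X)\cong\mathbb Z/k\mathbb Z$ with $k\ge2$. Its group at infinity is the \emph{cyclic} group of type $\frac{1}{dk^2}(1,dk\ell-1)$. Hence $Y$ is a lens space and $H_1(Y)\cong\mathbb Z/dk^2\mathbb Z$ is cyclic. Since $U$ is a deformation retract of $X$, you get $A=H_1(U)\cong\pi_1(X)^{\mathrm{ab}}\cong\mathbb Z/k\mathbb Z$. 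Then $A\oplus A$ is not cyclic for $k\ge2$, which contradicts the cyclicity of $H_1(Y)$. This is Hantzsche's obstruction to embedding a nontrivial lens space in $S^4$, and it disposes of every quotient case at once. Without this fact your argument does not conclude.

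The weaker consequences you did extract would not do the job:
\begin{itemize}
\item The perfect-square condition is vacuous here. For the $\mathbb Z/k$ quotient of the $A_{dk-1}$ space one has $\chi(X)=d$, so $b_2(X)=d-1$. Thus $b_2(X)=0$ forces $d=1$, and then $|H_1(Y)|=k^2$ is a perfect square anyway.
\item In your Eguchi--Hanson$/\mathbb Z_2$ example, $|\Gamma|=4$ is compatible with $A\oplus A=\mathbb Z_2\oplus\mathbb Z_2$. The contradiction appears only once you know $\Gamma\cong\mathbb Z_4$ acting as $\frac14(1,1)$, i.e.\ $Y=L(4,1)$.
\end{itemize}

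Finally, before invoking \v{S}uvaina you must verify her ALE convention, as the paper does in its Step~5. Two things are needed. First, order-$4$ decay, which comes from Bando--Kasue--Nakajima because a scalar-flat K\"ahler metric has $W^+=0$ and is here Ricci-flat. Second, decay of $J$ to a constant orthogonal complex structure, which follows from $\nabla J=0$.
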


\begin{proof}
All homology and cohomology groups have integer coefficients unless a coefficient group is explicitly displayed. We use the complex orientation of $X$ and the induced boundary orientation of $Y$. A compact core here is a compact codimension-zero smooth submanifold with the displayed product description of its exterior. In particular, collapsing the exterior collar onto $Y$ gives a deformation retraction of $X$ onto $U$.

\textbf{Step 1: The topology of the boundary $Y$.}
We identify $U$ with its image under the assumed embedding into $S^4$. The group $\Gamma$ is finite and acts freely and orientation-preservingly on $S^3$. Thus $Y$ is a connected closed oriented smooth three-manifold.
Since $S^3$ is simply connected, the covering $S^3\to Y$ is universal, and
\[
 \pi_1(Y)\cong\Gamma,
 \qquad H_1(Y)\cong\Gamma/[\Gamma,\Gamma].
\]
In particular, $H_1(Y)$ is finite.
We use the following form of Alexander duality
\cite[Corollary~3.45]{Hatcher}: for a nonempty proper compact
locally contractible subset $K\subset S^n$,
\[
 \widetilde H_j(S^n\setminus K;\mathbb Z)
 \cong\widetilde H^{n-j-1}(K;\mathbb Z).
\]
Here $K=Y$ is compact and locally contractible because it is a smooth closed manifold.
Taking $n=4$ and $j=0$ and using the orientation of $Y$, we obtain
\[
 \widetilde H_0(S^4\setminus Y;\mathbb Z)
 \cong H^3(Y;\mathbb Z)\cong\mathbb Z.
\]
The complement is an open manifold, so its connected components are path components. By the preceding computation of reduced zeroth homology, there are exactly two components.
The interior of $U$ is connected. It is both open and closed in
$S^4\setminus Y$: openness follows from the codimension-zero embedding,
and closedness follows because $U$ is closed and all of its boundary lies in $Y$. Thus it is one entire complementary component.
Let $W$ be the closure of the other component. The smooth embedded boundary has a two-sided collar, so $W$ is a connected compact smooth oriented four-manifold, and
\[
 S^4=U\cup W,\qquad U\cap W=\partial U=\partial W=Y.
\]
By the universal coefficient theorem in degree one, we have
\[
 H^1(Y;\mathbb Z)\cong
 \operatorname{Hom}(H_1(Y;\mathbb Z),\mathbb Z)=0,
\]
since $H_1(Y)$ is finite and $H_0(Y)=\mathbb Z$ has no
$\operatorname{Ext}$ term.
Applying Poincar\'e duality to the closed oriented three-manifold $Y$, we obtain
\[
 H_2(Y;\mathbb Z)\cong H^1(Y;\mathbb Z)=0.
\]
Together with $$H_0(Y)=H_3(Y)=\mathbb Z,$$ this also explains why $Y$ is a rational homology three-sphere.

\textbf{Step 2: Apply Mayer--Vietoris and identify its maps.}
For an open cover $Z=P\cup Q$, the homology Mayer--Vietoris sequence contains
\[
 \cdots\longrightarrow H_j(P\cap Q)
 \xrightarrow{(i_*,-j_*)}H_j(P)\oplus H_j(Q)
 \longrightarrow H_j(Z)
 \longrightarrow H_{j-1}(P\cap Q)\longrightarrow\cdots,
\]
see \cite[Section~2.2, pp.~149--150]{Hatcher}.
In the present situation, we thicken $U$ and $W$ slightly across their common collar. This gives an open cover of $S^4$ whose two members retract onto $U$ and $W$ and whose intersection retracts onto $Y$. Hence the sequence applies to
$S^4=U\cup_YW$ with these homology groups.
The relevant segment is
\[
\begin{aligned}
 H_2(Y)&\longrightarrow H_2(U)\oplus H_2(W)
 \longrightarrow H_2(S^4)\longrightarrow H_1(Y)\\
 &\xrightarrow{(i_*,-j_*)}H_1(U)\oplus H_1(W)
 \longrightarrow H_1(S^4).
\end{aligned}
\]
By \textbf{Step~1}, $H_2(Y)=H_2(S^4)=H_1(S^4)=0$, so exactness gives
\begin{equation}
 \begin{gathered}
 H_2(U)=H_2(W)=0,\qquad H_1(Y)\cong A\oplus B,\\
 A=H_1(U),\qquad B=H_1(W).
 \end{gathered}
 \label{eq:MV}
\end{equation}
In particular, both $A$ and $B$ are finite.
More precisely, the isomorphism
\begin{align}\label{H1AB}
H_1(Y)\to A\oplus B
\end{align} is the map induced by the two
inclusions, with a minus sign on the second component. Therefore
$$i_*:H_1(Y)\to H_1(U)$$ corresponds to projection onto $A$, and
\[
 \ker i_*\cong B.
\]
This identification of the kernel, not just an abstract direct-sum isomorphism, is needed in the next step.

\textbf{Step 3: Use duality to prove that the two summands agree.}
The homology exact sequence of the pair $(U,Y)$ contains
\[
 H_2(U)\longrightarrow H_2(U,Y)
 \xrightarrow{\partial}H_1(Y)
 \xrightarrow{i_*}H_1(U).
\]
Since $H_2(U)=0$, the boundary map $\partial$ is injective. By exactness its image is $\ker i_*$, so
\[
 H_2(U,Y)\cong B.
\]
We recall precisely the duality used here. If $P$ is a compact oriented $m$-manifold with boundary, cap product with its relative fundamental class gives
\[
 H^q(P;\mathbb Z)\cong H_{m-q}(P,\partial P;\mathbb Z),
 \qquad
 H^q(P,\partial P;\mathbb Z)\cong H_{m-q}(P;\mathbb Z).
\]
These are the two boundary cases of \cite[Theorem~3.43, p.~254]{Hatcher}. The hypotheses hold for $P=U$, with $m=4$ and $\partial U=Y$.
With $q=2$ in the first isomorphism, we obtain
\[
 H_2(U,Y)\cong H^2(U).
\]
The cohomological universal coefficient theorem
\cite[Theorem~3.2, p.~195]{Hatcher} gives, for a space $P$,
\[
 0\longrightarrow
 \operatorname{Ext}(H_{q-1}(P;\mathbb Z),\mathbb Z)
 \longrightarrow H^q(P;\mathbb Z)
 \longrightarrow
 \operatorname{Hom}(H_q(P;\mathbb Z),\mathbb Z)
 \longrightarrow0.
\]
We apply this with $P=U$ and $q=2$. The last term vanishes by
\eqref{eq:MV}. Consequently,
\[
 H^2(U)\cong\operatorname{Ext}(A,\mathbb Z).
\]
For a finite abelian group $A$ there is an isomorphism
$$\operatorname{Ext}(A,\mathbb Z)\cong A.$$ To check this explicitly, we write $$A=\bigoplus_{j=1}^m\mathbb Z/q_j\mathbb Z.$$ The free resolution
\[
 0\longrightarrow\mathbb Z\xrightarrow{\ \times q_j\ }
 \mathbb Z\longrightarrow\mathbb Z/q_j\mathbb Z
 \longrightarrow0
\]
shows that
\[
 \operatorname{Ext}(\mathbb Z/q_j\mathbb Z,\mathbb Z)
 \cong\operatorname{coker}
 (\mathbb Z\xrightarrow{\ \times q_j\ }\mathbb Z)
 \cong\mathbb Z/q_j\mathbb Z.
\]
The assertion follows by additivity over finite direct sums.
Combining the preceding identifications, we obtain
\[
 B\cong H_2(U,Y)\cong H^2(U)
 \cong\operatorname{Ext}(A,\mathbb Z)\cong A.
\]
Substitution into \eqref{H1AB} gives
\begin{equation}
 H_1(Y;\mathbb Z)\cong A\oplus A.
 \label{eq:double-group}
\end{equation}
The group structure in
\eqref{eq:double-group} will provide the final contradiction.

\textbf{Step 4: Ricci-flatness.}
The exterior product retracts onto its boundary by
$$(y,r)\mapsto(y,(1-s)r),\quad 0\le s\le1,$$ while $U$ remains fixed.
It follows that $U\hookrightarrow X$ is a homotopy equivalence, and
\eqref{eq:MV} in \textbf{Step 2} gives
\begin{align}\label{b2=0}
 H_2(X;\mathbb Z)=0,\qquad b_2(X)=0,\qquad H^2(X;\mathbb R)=0.
\end{align}
For the last equality we use the universal coefficient theorem over the field $\mathbb R$.
The Ricci-flatness assertion of \cite[Corollary~C]{Wright} states:
\emph{a scalar-flat K\"ahler ALE four-manifold of any positive order
with $b_2=0$ is Ricci-flat.} All its hypotheses have now been verified.
For clarity, we give the analytic mechanism from the proof of that corollary in \cite[Section~8]{Wright}.
Let $\Omega_h$ be the K\"ahler form and let
$\varrho=\operatorname{Ric}(\Omega_h)$ be the Ricci form. It is a
real closed $(1,1)$-form. Scalar-flatness gives
\[
 d\varrho=0,\qquad\operatorname{tr}_{\Omega_h}\varrho=0.
\]
By the second equality, $\varrho$ is primitive, i.e. $\varrho\wedge \Omega_h=0$. On a K\"ahler surface, real primitive $(1,1)$-forms are anti-self-dual in the complex orientation. Hence
\[
 *_h\varrho=-\varrho,
 \qquad d_h^*\varrho=-*_hd*_h\varrho=*_hd\varrho=0.
\]
Thus $\varrho$ is harmonic.
Positive ALE order $\tau>0$ means that, in lifted coordinates on the end, the metric has the decay
\[
 \partial^j(h_{ab}-\delta_{ab})=O(r^{-\tau-j})
 \quad (j=0,1,2),
\]
with the corresponding higher regularity in the smooth ALE formulation. From the coordinate formula for curvature, we obtain $|\varrho|_h\le C|\operatorname{Rm}(h)|_h=O(r^{-2-\tau})$.
The volume density is comparable to the Euclidean one. Therefore
\[
 \int_{\{r\ge r_0\}}|\varrho|_h^2\,dV_h
 \le C\int_{r_0}^{\infty}r^{-4-2\tau}r^3\,dr
 =\frac{C}{2\tau}r_0^{-2\tau}<\infty.
\]
On the compact part the integral is finite by smoothness. Thus
$\varrho$ is an $L^2$ harmonic two-form on $X$.
Here the required Hodge result is the middle-degree ALE consequence of \cite[Theorem~1A]{HHM}, used explicitly in the proof of
\cite[Corollary~C]{Wright}. If
\[
 \mathcal H^2_{(2)}(X,h)
 =\{\beta\in C^\infty(\Lambda^2T^*X):
       d\beta=0,\ d_h^*\beta=0,\ \|\beta\|_{L^2(h)}<\infty\},
\]
then
\[
 \mathcal H^2_{(2)}(X,h)
 \cong\operatorname{Im}
 \bigl(H_c^2(X;\mathbb R)\longrightarrow H^2(X;\mathbb R)\bigr).
\]
Equivalently, the right-hand side is
$$\operatorname{Im}(H^2(U,Y;\mathbb R)\to H^2(U;\mathbb R)).$$
Hausel, Hunsicker, and Mazzeo formulate Theorem~1A of \cite{HHM} for scattering metrics. In dimension four, its middle-degree formula is exactly this image of relative in absolute cohomology. Its use for positive-order ALE metrics can also be justified by choosing a complete metric $h_0$ that is exactly conical outside a compact set:
\[
 h_0=dr^2+r^2g_Y
     =\frac{dx^2}{x^4}+\frac{g_Y}{x^2},\qquad x=r^{-1},
\]
where $g_Y$ is the round quotient metric on $Y=S^3/\Gamma$.
The last expression is the scattering model at the radial boundary
$x=0$. The metrics $h$ and $h_0$ are uniformly equivalent, so they
give equivalent $L^2$ norms in every degree and the same reduced
$L^2$ de Rham cohomology, where ``reduced'' means that the space of
exact forms is replaced by its $L^2$ closure. On a complete manifold this cohomology is represented by $L^2$ harmonic forms.
Applying Theorem~1A to $h_0$ therefore gives the stated vanishing consequence for $h$. This is the ALE application already made in \cite{Wright}.
Because $H^2(X;\mathbb R)=0$, the displayed image is zero. Hence
$\varrho=0$ and
\[
 \operatorname{Ric}(h)=0.
\]

\textbf{Step 5: Anti-self-dual and decay estimates.}
Suppose, for a contradiction, that $h$ is nonflat.
On a scalar-flat K\"ahler surface, we have anti-self-duality in the complex orientation, or equivalently, vanishing of the self-dual Weyl tensor. This follows from the K\"ahler formula for the three eigenvalues of $W^+$, all proportional to the real scalar curvature, see \eqref{CGB W+} and the argument below. Thus $(X,h)$ is a complete Ricci-flat anti-self-dual ALE four-manifold of positive order.
We use the four-dimensional decay consequence of \cite{BKN} in the form recorded in \cite[Definition~1 and Remark~2]{Wright}:
\emph{a Ricci-flat anti-self-dual ALE four-manifold admits coordinates
at infinity in which it is ALE of order four.} In these coordinates,
\[
 \partial^j(h_{ab}-\delta_{ab})=O(r^{-4-j}),\qquad j\ge0.
\]
The ALE K\"ahler convention of \cite[Definition~1.1]{Suvaina} also requires $J-J_0=O(r^{-4})$ in the same end coordinates, where
$J_0$ is a constant Euclidean complex structure and the group at
infinity preserves $J_0$.
For completeness, we verify this decay directly. Since $\nabla^hJ=0$, its components obey
\[
 \partial_aJ^b{}_c
 =-\Gamma^b_{ad}J^d{}_c+\Gamma^d_{ac}J^b{}_d.
\]
Here the indexed $\Gamma^b_{ad}$ denote Christoffel symbols, not the finite group at infinity. Metric decay gives
$\Gamma^b_{ad}=O(r^{-5})$. Compatibility with $h$ gives bounded
coordinate components of $J$, so $|\partial J|=O(r^{-5})$. Along a ray $r\vartheta$ this derivative is integrable, and
\[
 |J(r\vartheta)-J_0(\vartheta)|
 \le C\int_r^\infty s^{-5}\,ds=\frac C4r^{-4}.
\]
For two directions on the covering sphere, we choose a spherical path between the corresponding points of radius $r$ with length at most $Cr$. Then
\[
 |J(r\vartheta_1)-J(r\vartheta_2)|
 \le Cr\sup_{|x|=r}|\partial J(x)|\le Cr^{-4}.
\]
Letting $r\to\infty$ shows that the limiting tensor is independent of direction. We denote it by $J_0$. Passing to the limit in
$J^2=-\operatorname{Id}$ and $h(J\cdot,J\cdot)=h$ shows that $J_0$
is an orthogonal complex structure for the Euclidean metric. By passing to the limit in the equivariance relation for $J$, we obtain equivariance of $J_0$. Therefore, after a constant orthogonal coordinate change, the group at infinity lies in $U(2)$.
We have therefore proved
\[
 h-h_{\mathrm{Euc}}=O(r^{-4}),\qquad J-J_0=O(r^{-4}),
\]
which verifies \cite[Definition~1.1]{Suvaina}. Differentiating the parallel-tensor equation also yields
$$\partial^j(J-J_0)=O(r^{-4-j}),$$ although the zeroth-order conclusion
already verifies the displayed convention in that definition.

\textbf{Step 6: The classification contradicts the boundary homology.}
By \textbf{Steps~4--5}, $h$ is Ricci-flat and the decay estimates required in
\cite[Definition~1.1]{Suvaina} are satisfied. Recall that we are
assuming $h$ is nonflat and have proved $b_2(X)=0$.
If $X$ were simply connected, the Ricci-flat K\"ahler ALE classification would make it hyperk\"ahler. Every nonflat member of that classification has nonzero second Betti number. See
\cite[Theorem~B and Remark~1.2]{Suvaina} and
\cite[Section~2]{Wright}. Hence $X$ is not simply connected.
The non-simply-connected classification in
\cite[Theorem~B and Sections~2--4]{Suvaina} gives
\[
 \pi_1(X)\cong\mathbb Z/k\mathbb Z,\qquad k\ge2,
\]
and a cyclic group at infinity of type
$\frac1{dk^2}(1,dk\ell-1)$ for some $d\ge1$ and
$\gcd(k,\ell)=1$. In particular,
\[
 H_1(Y;\mathbb Z)\cong\mathbb Z/(dk^2)\mathbb Z \text{ is cyclic. }
\]
On the other hand, $U$ is a deformation retract of $X$. Since first homology is the abelianisation of the fundamental group,
\[
 A=H_1(U;\mathbb Z)
   \cong H_1(X;\mathbb Z)
   \cong\mathbb Z/k\mathbb Z.
\]
The boundary-homology identity \eqref{eq:double-group} therefore implies
\[
 H_1(Y;\mathbb Z)
   \cong\mathbb Z/k\mathbb Z\oplus\mathbb Z/k\mathbb Z.
\]
For $k\ge2$ this group is not cyclic: it has $k^2$ elements, whereas the order of each individual element divides $k$.
This contradicts the cyclicity supplied by the classification. Therefore the assumption that $h$ is nonflat is impossible, and $h$ is flat.
\end{proof}

\subsection{Global curvature bounds}

We first bound the curvature energy in terms of the topology and the Calabi energy. The next task is to exclude a maximal-curvature bubble by verifying the hypotheses of the localisation and topological lemmas, using the uniform Sobolev inequality and the bounded oscillation of the potentials after holomorphic pullback.

\begin{lemma}
\label{lem:L2-curvature}
Let $\varphi(t)$, $t\in[0,T)$, be a Calabi flow on a compact K\"ahler surface. There is a constant $\Lambda$, depending only on the topology of $M$, the fixed K\"ahler class, and
$\mathcal C(\varphi_0)$, such that
\[
 \int_M|\operatorname{Rm}(g(t))|_{g(t)}^2\,dV_{g(t)}
 \le\Lambda,\qquad 0\le t<T.
\]
Here,
\[
 \Lambda=2V\bigl(\mathcal C(\varphi_0)+\underline R^2\bigr)
           +32\pi^2|\chi(M)|+96\pi^2|\tau(M)|.
\]
\end{lemma}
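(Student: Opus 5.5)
The plan is to use the orthogonal decomposition of the Riemann tensor on an oriented Riemannian four-manifold together with the Chern--Gauss--Bonnet and Hirzebruch signature formulas. These two formulas convert the Weyl and traceless Ricci parts of $\int|\Rm|^2$ into topological terms, leaving only the scalar curvature part. The scalar curvature part is then controlled by the Calabi energy through \eqref{Calabi energy bound}.

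First I write the decomposition $|\Rm|^2=|W^+|^2+|W^-|^2+2|\mathring{\Ric}|^2+\frac{s^2}{6}$ (up to the normalisation of the full tensor norm), where $s=2R_\varphi$ is the real scalar curvature. The identities I will use are
\[
 8\pi^2\chi(M)=\int_M\Bigl(|W|^2-\tfrac12|\mathring{\Ric}|^2+\tfrac{s^2}{24}\Bigr)dV,\qquad
 12\pi^2\tau(M)=\int_M\bigl(|W^+|^2-|W^-|^2\bigr)dV.
\]
On a K\"ahler surface, $W^+$ is determined by the scalar curvature: $|W^+|^2=\frac{s^2}{24}$. This is the formula \eqref{CGB W+} already referred to in Step~5 of Lemma~\ref{lem:topology}.

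Next I eliminate terms. The signature formula gives $\int|W^-|^2=\int\frac{s^2}{24}-12\pi^2\tau$. Substituting this into Gauss--Bonnet yields $\int|\mathring{\Ric}|^2$ as a linear combination of $\int s^2$, $\chi$ and $\tau$. Adding everything up, $\int|\Rm|^2$ becomes $c_1\int s^2\,dV+c_2\pi^2\chi+c_3\pi^2\tau$ with explicit constants. Wherever a topological term enters with either sign, I bound it crudely by its absolute value. I expect that the stated coefficients $32\pi^2|\chi|$ and $96\pi^2|\tau|$ come from this after accounting for the factor conventions of the full tensor norm.

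For the scalar term, I note that $\int s^2\,dV=4\int R_\varphi^2\,dV=2V\int R_\varphi^2\,d\mu_\varphi$ by \eqref{volume form and measure}. Since $\underline R$ is the $\mu_\varphi$-average of $R_\varphi$, we have $\int R_\varphi^2\,d\mu_\varphi=\cC(\varphi(t))+\underline R^2$, which is at most $\cC(\varphi_0)+\underline R^2$ by \eqref{Calabi energy bound}. Finally, $\underline R$ depends only on $c_1(M)\cdot\Omega$ and $V$, by \eqref{average scalar formhula}.

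The main obstacle is purely bookkeeping. The normalisation of $|\Rm|^2$ as a full tensor norm differs by a factor $4$ from the convention under which the Gauss--Bonnet integrand has the form written above. The coefficient of $\int s^2$ must come out no larger than $\frac12$ of $\int s^2$ (equivalently $2V(\cC+\underline R^2)$ after conversion), and the topological coefficients must match the constants stated in the lemma. I will fix the convention by testing on the round $S^4$ and on flat $T^4$, or by citing a standard reference for the $4$-dimensional identities in full-norm form. In any case the estimate holds with some constants of this form, and only the precise constants require care.
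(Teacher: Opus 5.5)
Your proposal follows the paper's proof step for step: the orthogonal curvature decomposition, Chern--Gauss--Bonnet and signature, the K\"ahler identity $|W^+|^2=s^2/24$, elimination of the Weyl and trace-free Ricci integrals, and the conversion $\int s^2\,dV=2V(\cC(\varphi(t))+\underline R^2)\le 2V(\cC(\varphi_0)+\underline R^2)$. Since the lemma states explicit constants, though, the bookkeeping you defer is the actual content, and one expectation you state is wrong.

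Measure $W^\pm$ as endomorphisms of $\Lambda^\pm$. This is the convention in which your Gauss--Bonnet, signature and K\"ahler formulas are correct. Then the full-tensor decomposition reads $|\Rm|^2=4(|W^+|^2+|W^-|^2)+2|E|^2+\tfrac16 s^2$, and the factor $4$ sits only on the Weyl terms. The elimination then gives the exact identity
\[
\int_M|\Rm|^2\,dV=\int_M s^2\,dV-32\pi^2\chi(M)-96\pi^2\tau(M).
\]
So the coefficient of $\int s^2$ is exactly $1$, not at most $\tfrac12$. Your own conversion $\int s^2\,dV=2V\int R_\varphi^2\,d\mu_\varphi$ already shows that coefficient $1$ is what produces the term $2V(\cC+\underline R^2)$ in $\Lambda$. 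Bounding $-\chi$ and $-\tau$ by their absolute values then yields precisely the stated $\Lambda$.

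Your proposed sanity checks on round $S^4$ and flat $T^4$ cannot fix the Weyl normalisation, because $W\equiv0$ on both. Fubini--Study $\mathbb{CP}^2$ would do it, since there $E=0$, $W^-=0$, $\chi=3$ and $\tau=1$.
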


\begin{proof}
\textbf{Step 1: Curvature identities.}
For a fixed time $t$, write $q=g(t)$. In this paragraph, we use real Riemannian curvatures and norms and set
\[
 E_q=\operatorname{Ric}(q)-\frac14R_{\mathrm{real}}(q)q,
 \qquad
 I_t=\int_M R_{\mathrm{real}}(q)^2\,dV_q.
\]
Here $E_q$ is the trace-free Ricci tensor.
With the complex orientation, the Hodge star on real two-forms has eigenvalues $+1$ and $-1$, and
\[
 \Lambda^2T^*M=\Lambda^+\oplus\Lambda^-.
\]
We regard $W_q^+$ and $W_q^-$ as the Weyl curvature endomorphisms of these rank-three bundles, with their Hilbert--Schmidt norms.
Let $|\operatorname{Rm}(q)|_q$ denote the norm of the full covariant curvature tensor, as in Section~2.1. The orthogonal algebraic curvature decomposition, with the tensor norm for $\operatorname{Rm}$ and operator norms for $W_q^\pm$, is
\begin{align}\label{The orthogonal algebraic curvature decomposition}
 |\operatorname{Rm}(q)|_q^2
 =4\left(|W_q^+|_q^2+|W_q^-|_q^2\right)
   +2|E_q|_q^2+\frac16R_{\mathrm{real}}(q)^2.
\end{align}
The coefficient $4$ is the conversion from the norm of a curvature endomorphism on two-forms to the norm of its covariant four-tensor: in an orthonormal frame, summing over ordered index pairs counts each pair of unordered skew pairs four times.
For a closed oriented Riemannian four-manifold, the Chern--Gauss--Bonnet formula and the Hirzebruch signature formula, in these conventions, are
 \begin{equation}\label{CGB}
 \begin{split}
 8\pi^2\chi(M)
 &=\int_M\left(
 |W_q^+|_q^2+|W_q^-|_q^2
 -\frac12|E_q|_q^2+\frac1{24}R_{\mathrm{real}}(q)^2
 \right)dV_q,\\
 12\pi^2\tau(M)
 &=\int_M\left(|W_q^+|_q^2-|W_q^-|_q^2\right)dV_q.
  \end{split}
 \end{equation}
Here $\chi(M)$ and $\tau(M)$ are the Euler characteristic and the signature. See
\cite[Section~6]{ChenHe} and \cite[Section~5]{CLW}.
For a K\"ahler surface in its complex orientation, the eigenvalues of $W_q^+$ are
 \begin{equation*}
 \begin{split}
 \frac{R_{\mathrm{real}}(q)}6,
 \qquad -\frac{R_{\mathrm{real}}(q)}{12},
 \qquad -\frac{R_{\mathrm{real}}(q)}{12}.
  \end{split}
 \end{equation*}
Consequently,
 \begin{equation}\label{CGB W+}
 \begin{split}
 |W_q^+|_q^2
 =\left(\frac1{36}+\frac1{144}+\frac1{144}\right)
 R_{\mathrm{real}}(q)^2
 =\frac1{24}R_{\mathrm{real}}(q)^2,
 \qquad
 \int_M|W_q^+|_q^2\,dV_q=\frac{I_t}{24}.
  \end{split}
 \end{equation}
In particular, scalar-flatness of a K\"ahler surface implies $W_q^+=0$, which is the anti-self-duality condition used later.

\textbf{Step 2: Eliminate the Weyl and trace-free Ricci integrals.}
By the signature formula, we first obtain
\[
 \int_M|W_q^-|_q^2\,dV_q
 =\frac{I_t}{24}-12\pi^2\tau(M).
\]
Adding the two Weyl integrals, we obtain
\[
 \int_M\left(|W_q^+|_q^2+|W_q^-|_q^2\right)dV_q
 =\frac{I_t}{12}-12\pi^2\tau(M).
\]
Substituting into the Gauss--Bonnet formula, we obtain
\[
 8\pi^2\chi(M)
 =\frac{I_t}{12}-12\pi^2\tau(M)
   -\frac12\int_M|E_q|_q^2\,dV_q+\frac{I_t}{24}.
\]
Since $1/12+1/24=1/8$, rearrangement gives
\[
 \int_M|E_q|_q^2\,dV_q
 =\frac{I_t}{4}-16\pi^2\chi(M)-24\pi^2\tau(M).
\]
Integrating the orthogonal algebraic curvature decomposition \eqref{The orthogonal algebraic curvature decomposition} and using the preceding formulas, we find
\[
 \begin{aligned}
\int_M|\operatorname{Rm}(q)|_q^2\,dV_q
 &=4\left(\frac{I_t}{12}-12\pi^2\tau(M)\right)
+2\left(\frac{I_t}{4}-16\pi^2\chi(M)
                     -24\pi^2\tau(M)\right)+\frac{I_t}{6}\\
 &=I_t-32\pi^2\chi(M)-96\pi^2\tau(M).
 \end{aligned}
\]

\textbf{Step 3: Insert the normalised Calabi energy.}
Our conventions are
\[
 R_{\mathrm{real}}(q)=2R_{\varphi(t)},
 \quad dV_q=\frac V2\,d\mu_t,
 \quad \int_Md\mu_t=1,\quad
  \int_M(R_{\varphi(t)}-\underline R)\,d\mu_t=0.
\]
Expanding the square therefore gives
 \begin{align*}
 I_t
 &=2V\int_M R_{\varphi(t)}^2\,d\mu_t
 =2V\int_M\bigl[(R_{\varphi(t)}-\underline R)+\underline R\bigr]^2
                     \,d\mu_t\\
 &=2V\bigl[\mathcal C(\varphi(t))+\underline R^2\bigr]
 \le2V\bigl[\mathcal C(\varphi_0)+\underline R^2\bigr],
 \end{align*}
where the last inequality is \eqref{eq:calabi-monotonicity}. The class is fixed, so $V=\Omega^2$ and
$\underline R=4\pi c_1(M)\cdot\Omega/V$ are fixed as well.
This completes the proof.
\end{proof}

\begin{proposition}
\label{prop:bubble-exclusion}
\label{prop:gauged-bubble}
Let $0<T\le\infty$ and let $\omega_t$ be a Calabi flow on a closed K\"ahler surface $M$, with uniformly bounded Sobolev constants in the sense of Definition~\ref{defn:Sobolev-volume}. Suppose that for every $t<T$ there are a holomorphic automorphism $a_t\in\operatorname{Aut}_0(M,J)$ and a potential $\psi_t$ satisfying
\[
 a_t^*\omega_t=\omega+dd^c\psi_t>0,
 \qquad \operatorname{osc}_M\psi_t\le B<\infty.
\]
Then
\[
 \sup_{M\times[0,T)}|\operatorname{Rm}(g_t)|_{g_t}<\infty.
\]
\end{proposition}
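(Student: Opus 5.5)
We argue by contradiction. Suppose $\sup_{M\times[0,T)}|\operatorname{Rm}(g_t)|_{g_t}=\infty$ and pick times $t_i\to T$ and points $x_i\in M$ essentially realising the maximum of the curvature on $M\times[0,t_i]$. Set $Q_i=|\operatorname{Rm}(g_{t_i})|(x_i)\to\infty$. Since $a_t$ is a holomorphic automorphism, $a_t^*\omega_t$ is again a Kähler metric isometric to $\omega_t$. It therefore has the same Sobolev constant, curvature norms and Calabi energy, and it is still a solution of the Calabi flow, because the flow is diffeomorphism-equivariant. We may thus replace $\omega_t$ by $g_t':=a_t^*\omega_t=\omega+dd^c\psi_t$. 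After adding a constant we normalise $\sup_M\psi_t=0$, so that $\|\psi_t\|_\infty\le B$.

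We now run the Chen--He blow-up analysis \cite[Theorem~1.1 and Section~4]{ChenHe} for the rescaled metrics $Q_i g'_{t_i}$ centred at $a_{t_i}^{-1}(x_i)$. The ingredients are the uniform Sobolev bound, which is scale-invariant in real dimension four and gives noncollapsing, the $L^2$ curvature bound of Lemma~\ref{lem:L2-curvature}, and the Calabi energy monotonicity, which forces the scalar curvature of the limit to vanish. This yields a subsequence, an exhaustion $U_i\Subset X$ of a connected four-manifold, and embeddings $F_i:U_i\to M$ such that $h_i=Q_iF_i^*g'_{t_i}\to h$ and $F_i^*J\to J_\infty$ smoothly locally. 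Here $(X,h,J_\infty)$ is a complete, nonflat, scalar-flat Kähler ALE surface with $|\operatorname{Rm}(h)|=1$ at the base point. One point needs care. Chen--He obtain convergence modulo diffeomorphisms, so the convergence maps have to be taken as the embeddings $F_i$, and the complex structures must be pulled back along them; smooth convergence of $J_i$ then follows because $J$ is parallel for each $g'_t$. To make sure the ALE structure meets Definition~\ref{ALE defn} with some positive order $\tau$, including the decay of $J-J_{\mathrm{Euc}}$, we invoke Anderson \cite{MR2148795} and Tian--Viaclovsky \cite{TV} for curvature decay together with $\nabla J=0$, exactly as in Step~5 of Lemma~\ref{lem:topology}. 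The hypotheses of Lemma~\ref{lem:Liouville} then hold.

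Lemma~\ref{lem:localisation} applies to $\psi_i=\psi_{t_i}$, with $F_i$, $Q_i$ and the limit as above. It shows that for every connected compact smooth domain $U\Subset X$ we have $\operatorname{diam}_gF_i(U)\to0$, so $U$ embeds in a coordinate ball of $\mathbb R^4$ and hence in $S^4$. We choose $U=\{r\le R\}$ for $R$ large in the ALE coordinates, so that $X\setminus\operatorname{int}U\cong(S^3/\Gamma)\times[0,\infty)$. Lemma~\ref{lem:topology} then gives that $h$ is flat, contradicting $|\operatorname{Rm}(h)|(\text{base point})=1$. The curvature is therefore bounded.

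The main obstacle is the first step: extracting from Chen--He an ALE limit in exactly the form required by Lemma~\ref{lem:localisation}. This requires genuine embeddings $F_i$ into the fixed manifold $M$ along which the potentials pull back, and positive ALE order for both metric and complex structure. When $T=\infty$, the argument is unchanged, because only the uniform bound on $\operatorname{osc}\psi_t$ is used, not a finite-time potential estimate.
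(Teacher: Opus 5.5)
Your outline matches the paper's proof: a Chen--He maximal-curvature bubble, ALE structure from Tian--Viaclovsky together with $\nabla J=0$, the Liouville lemma, Lemma~\ref{lem:localisation} with the normalised potentials, and Lemma~\ref{lem:topology}.

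\textbf{The gap.} Your opening reduction is false as stated. Equivariance gives a new Calabi flow only for a \emph{fixed} holomorphic automorphism $a$, because then $a^*dd^cR_{\omega_t}=dd^cR_{a^*\omega_t}$. Here $a_t$ varies with $t$, and the hypothesis gives no regularity in $t$ at all. Even for a smooth family, $\partial_t(a_t^*\omega_t)=dd^cR_{a_t^*\omega_t}+\mathcal L_{Y_t}(a_t^*\omega_t)$, where $Y_t$ is the real holomorphic vector field generating $a_t$. So $g'_t=a_t^*\omega_t$ is not a Calabi flow; in general it need not even be continuous in $t$. Likewise, $\psi_t$ does not satisfy $\partial_t\psi_t=R-\underline R$.

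The Chen--He blow-up is genuinely parabolic, so this matters. The higher-derivative control needed for smooth convergence comes from the flow on backward intervals $[t_i-AQ_i^{-2},t_i]$; a single slice with $|\operatorname{Rm}|\le 1$ does not provide it. Scalar-flatness of the limit comes, as you say yourself, from Calabi energy monotonicity on those same intervals. Hence the blow-up cannot be run on the gauged family $g'_t$.

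\textbf{The repair.} The fix uses only the single slices $t_i$, and it is exactly Step~8 of the paper's proof.
\begin{enumerate}
\item Run Chen--He on the original flow at $(x_i,t_i)$. This gives embeddings $F_i:U_i\to M$ with $Q_iF_i^*g_{t_i}\to h$ and $F_i^*J\to J_\infty$.
\item Set $G_i=a_{t_i}^{-1}\circ F_i$. Since $a_{t_i}$ is a biholomorphism, $Q_iG_i^*(a_{t_i}^*g_{t_i})=Q_iF_i^*g_{t_i}$ and $G_i^*J=F_i^*J$. So the limit $(X,h,J_\infty)$ is unchanged, and $G_i$ is centred at $a_{t_i}^{-1}(x_i)$ as you intended.
\item Apply Lemma~\ref{lem:localisation} to the embeddings $G_i$, the scales $Q_i$, and the potentials $\psi_{t_i}-\sup_M\psi_{t_i}$, whose sup-norm is at most $B$.
\end{enumerate}

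You should also verify explicitly that the limit has finite $L^2$ curvature energy and satisfies the homogeneous Sobolev inequality, since these are the hypotheses of the Tian--Viaclovsky theorem. The first follows from Lemma~\ref{lem:L2-curvature}. The second follows by passing the rescaled Sobolev inequality to the limit, where its $L^2$ term carries a factor $Q_i^{-1}$. With these changes your argument coincides with the paper's.
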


\begin{proof}
\textbf{Step 1: Parabolic rescaling.}
Suppose, to the contrary, that the curvature is unbounded on
$M\times[0,T)$. The function
\[
 t\longmapsto\max_{x\in M}|\operatorname{Rm}(g_t)|_{g_t}(x)
\]
is continuous on every compact subinterval of $[0,T)$. Choosing the first times at which it reaches a sequence of increasing levels tending to infinity gives points $p_i$ and times $t_i\to T$ with
\[
 Q_i=|\operatorname{Rm}(g_{t_i})|_{g_{t_i}}(p_i)
 =\max_{M\times[0,t_i]}|\operatorname{Rm}(g_t)|_{g_t}
 \longrightarrow\infty.
\]
For $T=\infty$, the assertion $t_i\to T$ means $t_i\to\infty$. The times cannot accumulate at a smaller finite time, since the flow is smooth there.
We introduce the rescaled time variable $s$ and set
\[
 g_i(s)=Q_i \cdot g\bigl(t_i+Q_i^{-2}s\bigr),
 \qquad -Q_i^2t_i\le s\le0.
\]
This is the fourth-order parabolic scaling. Indeed, if
$\omega_i(s)=Q_i\omega_{t_i+Q_i^{-2}s}$, then
\[
 R_{\omega_i(s)}=Q_i^{-1}R_{\omega_{t_i+Q_i^{-2}s}},
 \qquad
 \partial_s\omega_i
 =Q_i^{-1}dd^cR_{\omega_{t_i+Q_i^{-2}s}}
 =dd^cR_{\omega_i(s)}.
\]
Thus the rescaled family is again a Calabi flow. Its curvature satisfies
\[
 |\operatorname{Rm}(g_i(s))|_{g_i(s)}\le1,
 \qquad
 |\operatorname{Rm}(g_i(0))|_{g_i(0)}(p_i)=1.
\]
Since $Q_i^2t_i\to\infty$, any fixed interval $[-A,0]$, $A>0$, lies in the rescaled existence interval for large $i$.

\textbf{Step 2: Blow-up limit.}
We use
\cite[Theorem~1.1]{ChenHe}:
a smooth Calabi flow on a compact K\"ahler manifold with uniformly bounded Sobolev constants and unbounded curvature admits a maximal-curvature rescaling subsequence converging smoothly in the pointed Cheeger--Gromov sense to a complete noncompact ancient Calabi flow. In complex dimension two this limit is stationary, scalar-flat, and ALE, with finite total curvature energy.
In summary, we have a connected smooth scalar-flat limit $(X,h,J_\infty,p)$, an exhaustion $U_i\Subset X$, and pointed embeddings $$F_i:U_i\to M$$ such that, after passing to a subsequence,
\[
 h_i:=Q_i\cdot F_i^*g_{t_i}\longrightarrow h,
 \qquad J_i:=F_i^*J\longrightarrow J_\infty
\]
smoothly on every compact subset, with
\[
F_i(p)=p_i,\quad  |\operatorname{Rm}(h)|_h(p)=1.
\]
The last equality follows from smooth convergence and the normalisation in Step~1, and ensures that $h$ is nonflat.

\textbf{Step 3: Curvature energy on the limit.}
For a constant $Q>0$ and $\widehat g=Q\cdot g$ in real dimension four,
\[
 dV_{\widehat g}=Q^2dV_g,
 \qquad |\nabla f|_{\widehat g}^2=Q^{-1}|\nabla f|_g^2,
 \qquad
 |\operatorname{Rm}(\widehat g)|_{\widehat g}^2
 =Q^{-2}|\operatorname{Rm}(g)|_g^2.
\]
For the last equality, the Levi-Civita connection is unchanged by constant scaling, its covariant curvature four-tensor acquires one factor $Q$, and the four inverse metrics used in its squared norm contribute $Q^{-4}$. Thus
\[
 \int_A|\operatorname{Rm}(\widehat g)|_{\widehat g}^2\,dV_{\widehat g}
 =\int_A|\operatorname{Rm}(g)|_g^2\,dV_g
\]
for every measurable set $A$.
Let $K\Subset X$ be a compact domain. For large $i$, we have $K\subset U_i$, so Lemma~\ref{lem:L2-curvature} gives
 \begin{align*}
 \int_K|\operatorname{Rm}(h_i)|_{h_i}^2\,dV_{h_i}
 =\int_{F_i(K)}|\operatorname{Rm}(Q_ig_{t_i})|_{Q_ig_{t_i}}^2
                           \,dV_{Q_ig_{t_i}}
 =\int_{F_i(K)}|\operatorname{Rm}(g_{t_i})|_{g_{t_i}}^2
                           \,dV_{g_{t_i}}
 \le\Lambda,
 \end{align*}
since the energy is invariant under pullback and scaling. Smooth convergence lets us pass to the limit on this fixed compact domain:
\[
 \int_K|\operatorname{Rm}(h)|_h^2\,dV_h\le\Lambda.
\]
An increasing compact exhaustion of $X$ and monotone convergence therefore give
\[
 \int_X|\operatorname{Rm}(h)|_h^2\,dV_h\le\Lambda.
\]

\textbf{Step 4: Derive the homogeneous Sobolev inequality.}
The assumed inequality on the original flow is
\[
 \left(\int_M|f|^4\,dV_{g_t}\right)^{1/2}
 \le S_0\left(
 \int_M|\nabla f|_{g_t}^2\,dV_{g_t}
 +\int_Mf^2\,dV_{g_t}\right),
\]
where $S_0=C_S(V/2)^{-1/2}$ is independent of $t$, as in
\eqref{eq:Sobolev-volume}. Under $\widehat g=Qg$, each term transforms
as follows:
 \begin{align*}
\int_M|f|^4\,dV_{\widehat g}
 =Q^2\int_M|f|^4\,dV_g,\quad
 \int_M|\nabla f|_{\widehat g}^2\,dV_{\widehat g}
 =Q\int_M|\nabla f|_g^2\,dV_g,\quad
 \int_M f^2\,dV_{\widehat g}=Q^2\int_Mf^2\,dV_g.
 \end{align*}
Multiplying the original inequality at $t_i$ by $Q_i$ consequently gives
\[
 \left(\int_M|f|^4\,dV_{Q_ig_{t_i}}\right)^{1/2}
 \le S_0\left(
 \int_M|\nabla f|_{Q_ig_{t_i}}^2\,dV_{Q_ig_{t_i}}
 +Q_i^{-1}\int_Mf^2\,dV_{Q_ig_{t_i}}\right).
\]
To pass to the limit, let $f\in C_c^\infty(X)$ be fixed. For all sufficiently large $i$, its support lies in the interior of $U_i$. Define
\[
 f_i=
 \begin{cases}
 f\circ F_i^{-1},&\text{on }F_i(U_i),\\
 0,&\text{on }M\setminus F_i(U_i).
 \end{cases}
\]
This extension is smooth: $F_i(\operatorname{supp}f)$ is a compact subset of the open image $F_i(U_i)$, so the function already vanishes near the boundary of that image. Applying the last inequality to
$f_i$ and pulling it back gives
\[
 \left(\int_X|f|^4\,dV_{h_i}\right)^{1/2}
 \le S_0\left(
 \int_X|\nabla f|_{h_i}^2\,dV_{h_i}
 +Q_i^{-1}\int_X f^2\,dV_{h_i}\right).
\]
All integrals here are supported in one fixed compact subset of $X$.
The smooth convergence of the metrics implies convergence of the inverse metrics and volume densities there. The first two integrals therefore converge to their $h$-counterparts, whereas the last term tends to zero. We obtain
\[
 \left(\int_X|f|^4\,dV_h\right)^{1/2}
 \le S_0\int_X|\nabla f|_h^2\,dV_h,
 \qquad f\in C_c^\infty(X).
\]
The same inequality holds for compactly supported Lipschitz functions by approximation on their compact support.
The noncompact Sobolev constant used by Tian--Viaclovsky is the best constant in
\[
 \|f\|_{L^4(X,h)}\le C_{\mathrm{hom}}\|\nabla f\|_{L^2(X,h)},
 \qquad f\in C_c^{0,1}(X).
\]
Thus we obtain $C_{\mathrm{hom}}\le S_0^{1/2}$ in the notation of \cite[(1.5)]{TV}. This is a homogeneous inequality, with no
$L^2$ term on the right.

\textbf{Step 5: Apply the ALE decay theorem.}
We use the following specialisation of
\cite[Theorem~5.1]{TV}, namely its anti-self-dual case and
Sobolev-constant alternative: Let $(Z,q)$ be a complete noncompact oriented Riemannian four-manifold with zero scalar curvature and $W_q^+=0$. If
$\int_Z|\operatorname{Rm}(q)|_q^2\,dV_q<\infty$ and its homogeneous
Sobolev constant is finite, then it has finitely many ends, each ALE of order $\tau$ for every $0<\tau<2$.
Here $X$ is complete, smooth, noncompact, and four-dimensional by
\textbf{Step~2}. Its metric is scalar-flat K\"ahler, hence $W_h^+=0$ by the
K\"ahler identity in the preceding Lemma \ref{lem:L2-curvature}. \textbf{Step~3} gives finite total curvature energy, and \textbf{Step~4} gives the required homogeneous Sobolev inequality.
For clarity, an ALE end of order $\tau$ in this Riemannian result has the form $(\mathbb R^4\setminus\overline{B_R})/\Gamma$, where
$\Gamma\subset SO(4)$ is finite and acts freely on $S^3$. On the
covering exterior region, there are coordinates in which the metric coefficients and all their derivatives approach the Euclidean ones. This is the convention in \cite[Section~1, (1.3)--(1.4)]{TVALE}. Fixing any $0<\tau<2$, we obtain
\begin{equation}
 \partial^\ell(h_{ab}-\delta_{ab})=O(r^{-\tau-\ell}),
 \qquad \ell\ge0.
 \label{eq:metric-decay}
\end{equation}
Here $r=|x|$ and $\partial^\ell$ denotes an arbitrary Euclidean partial derivative of order $\ell$ on the covering region.
The theorem gives metric decay. To use the ALE K\"ahler definition fixed in Definition~\ref{ALE defn}, we must still verify decay of $J_\infty$ in the same coordinates.

\textbf{Step 6: Obtain the limiting constant complex structure.}
We lift $h$ and $J_\infty$ to an ALE covering region. Because
$(h,J_\infty)$ is K\"ahler,
\begin{align*}
 \nabla^hJ_\infty=0.
\end{align*}
In coordinates, this identity reads
\begin{align}\label{The parallel-tensor equation}
 \partial_a(J_\infty)^b{}_c
 =-\Gamma^b_{ad}(J_\infty)^d{}_c
   +\Gamma^d_{ac}(J_\infty)^b{}_d,
\end{align}
where $\Gamma^b_{ad}$ are Christoffel symbols. Their coordinate formula is
\[
 \Gamma^b_{ad}
 =\frac12 h^{be}
 \bigl(\partial_a h_{de}+\partial_d h_{ae}-\partial_e h_{ad}\bigr).
\]
The metric bounds~\eqref{eq:metric-decay} and the corresponding inverse metric bounds imply
\[
 \partial^\ell\Gamma^b_{ad}=O(r^{-1-\tau-\ell}),\qquad\ell\ge0.
\]
Also, $J_\infty$ is an $h$-isometry, so its pointwise squared norm as an endomorphism is $4$. Since $h$ is uniformly equivalent to the Euclidean metric at infinity, all coordinate components of $J_\infty$ are bounded. From \eqref{The parallel-tensor equation}, we get
\[
 |\partial J_\infty|=O(r^{-1-\tau}).
\]
Let $\vartheta\in S^3$ be fixed. For $r_2>r_1$ sufficiently large,
 \begin{align*}
 |J_\infty(r_2\vartheta)-J_\infty(r_1\vartheta)|
 &\le C\int_{r_1}^{r_2}s^{-1-\tau}\,ds
 \le\frac C\tau r_1^{-\tau}.
 \end{align*}
Thus the components have a limit along each ray, denoted
$J_0(\vartheta)$, and
\[
 |J_\infty(r\vartheta)-J_0(\vartheta)|
 \le\frac C\tau r^{-\tau}.
\]
For two directions, we join $r\vartheta_1$ and $r\vartheta_2$ by a spherical path of length at most $\pi r$. Then
\[
 |J_\infty(r\vartheta_1)-J_\infty(r\vartheta_2)|
 \le\pi r\sup_{|x|=r}|\partial J_\infty(x)|
 \le Cr^{-\tau}.
\]
Letting $r\to\infty$ proves that $J_0(\vartheta)$ is independent of
$\vartheta$. We denote this constant matrix by $J_0$.
Passing to the limit in $J_\infty^2=-\operatorname{Id}$ and
$h(J_\infty\cdot,J_\infty\cdot)=h$ gives
\[
 J_0^2=-\operatorname{Id},
 \qquad \delta(J_0\cdot,J_0\cdot)=\delta.
\]
Thus $J_0$ is an orthogonal Euclidean complex structure. The lifted tensor is equivariant under the group at infinity:
\[
 J_\infty(\gamma x)=\gamma J_\infty(x)\gamma^{-1},
 \qquad\gamma\in\Gamma.
\]
Taking the limit along rays gives
$J_0=\gamma J_0\gamma^{-1}$. Therefore every $\gamma$ preserves
$J_0$. In a constant orthonormal basis adapted to $J_0$, the group
lies in $U(2)$ and $(\delta,J_0)$ is the standard flat K\"ahler structure.

\textbf{Step 7: Higher derivatives, weighted H\"older decay, and one end.}
Differentiating \eqref{The parallel-tensor equation} proves inductively that
\[
 \partial^\ell(J_\infty-J_0)=O(r^{-\tau-\ell}),\qquad\ell\ge0.
\]
Indeed, the case $\ell=0$ was proved in \textbf{Step~6}. For $\ell\ge1$, each term in the differentiated equation is a product of a derivative of a Christoffel symbol of order $a$ and a derivative of $J_\infty$ of order $b$, where $$a+b=\ell-1.$$ When $b=0$, its size is
$O(r^{-\tau-\ell})$ because $J_\infty$ is bounded. When $b\ge1$,
the induction hypothesis makes it $O(r^{-\ell-2\tau})$, which decays at least as fast.
We have already proved the derivative terms for both
$T=h-\delta$ and $T=J_\infty-J_0$ in Definition~\ref{ALE defn}. To check the seminorm, let
$y,z\in B_{r/10}(x)$. The segment between them lies in that ball,
and all its points have radius comparable to $r$. The mean-value estimate gives
\[
 \frac{|\partial^2T(y)-\partial^2T(z)|}{|y-z|^\alpha}
 \le |y-z|^{1-\alpha}
       \sup_{B_{r/10}(x)}|\partial^3T|
 \le Cr^{1-\alpha}r^{-3-\tau}.
\]
Taking the supremum over $y\ne z$, we obtain
\[
 [\partial^2T]_{C^{0,\alpha}(B_{r/10}(x))}
 \le Cr^{-2-\alpha-\tau}.
\]
Multiplication by $r^{2+\alpha}$ gives precisely the missing
$O(r^{-\tau})$ term.
Thus we have verified that the blow-up limit is a connected ALE K\"ahler manifold in the sense of Definition~\ref{ALE defn}. From \cite[Theorem~2.2]{HRS}, it has one end and is biholomorphic to the complement of a connected complex divisor in a compact K\"ahler orbifold. We may therefore apply Lemma~\ref{lem:Liouville} to $(X,h,J_\infty)$. Let a sufficiently large radius in the single end be fixed. Its coordinate sphere descends to $$Y=S^3/\Gamma,$$ and its complement on the bounded side is a connected compact smooth core $U$. The exterior radial coordinate gives
\[
 X\setminus\operatorname{int}U\cong Y\times[0,\infty).
\]
This is the precise core structure required by Lemma~\ref{lem:topology}.

\textbf{Step 8: Match the holomorphic pullbacks to the localisation lemma.}
At the selected times, we use the automorphisms supplied by the hypothesis and set
\[
 \widehat\omega_i=a_{t_i}^*\omega_{t_i},
 \qquad \widehat g_i=a_{t_i}^*g_{t_i},
 \qquad G_i=a_{t_i}^{-1}\circ F_i.
\]
The equality for $\widehat g_i$ follows from holomorphicity of
$a_{t_i}$: pulling back the K\"ahler form and keeping $J$ fixed gives
exactly the pulled-back Riemannian metric. Each $G_i$ is an embedding on the same domain $U_i$ as $F_i$.
Pullback reverses the order of composition. Consequently,
 \begin{align*}
 Q_iG_i^*\widehat g_i
 =Q_iF_i^*(a_{t_i}^{-1})^*(a_{t_i}^*g_{t_i})
 =Q_iF_i^*g_{t_i}=h_i.
 \end{align*}
Since $(a_{t_i}^{-1})^*J=J$, we also have
\[
 G_i^*J=F_i^*(a_{t_i}^{-1})^*J=F_i^*J=J_i.
\]
Thus the rescaled metric and pulled-back complex structure limits are unchanged.
Adding a constant to $\psi_t$ does not change $dd^c\psi_t$. We therefore replace $\psi_t$ by $\psi_t-\sup_M\psi_t$. The oscillation assumption gives
\[
 -B\le\psi_t\le0,
 \qquad \sup_M\psi_t=0,
 \qquad \|\psi_t\|_{L^\infty}\le B.
\]
At the selected times,
\[
 \widehat\omega_i=\omega+dd^c\psi_{t_i}>0.
\]
We apply Lemma~\ref{lem:localisation} with the fixed reference data
$(M,J,g,\omega)$, the varying metrics $\widehat g_i$, potentials
$\psi_{t_i}$, scales $Q_i$, and embeddings $G_i$.
Here the potentials have the required normalisation and uniform bound, and $Q_i\to\infty$. By the preceding identities and \textbf{Step~7}, we also have the required smooth local convergence and ALE structure. Therefore the core $U$ embeds smoothly into a coordinate ball of $M$, and hence into $\mathbb R^4$. Composing this embedding with the inverse stereographic identification
$\mathbb R^4\cong S^4\setminus\{\text{point}\}$ gives a smooth
embedding $U\hookrightarrow S^4$.
The space $(X,h,J_\infty)$ is connected, complete, smooth, scalar-flat K\"ahler, and ALE of positive order, with the core and exterior collar specified in \textbf{Step~7}. All hypotheses of Lemma~\ref{lem:topology} are therefore satisfied, and that lemma forces $h$ to be flat. This contradicts
\[
 |\operatorname{Rm}(h)|_h(p)=1.
\]
The assumed unbounded curvature is impossible, proving the proposition.
\end{proof}

\section{Proofs of the main results}
\label{Proofs of the main results}

We first combine the potential estimate and the bubble-exclusion proposition to prove finite time extension. It then remains to derive the required Sobolev bound from the hypotheses of Corollary~\ref{cor:Tian-main}, with the real scalar curvature and volume conventions kept explicit throughout.

\subsection{Proof of Theorem~\ref{thm:main}}

\begin{proof}[Proof of Theorem~\ref{thm:main}]
\textbf{Step 1: Verify the hypotheses of the curvature bound.}
By hypothesis there is a finite constant $C_S$ such that
$C_S(\varphi(t))\le C_S$ for all $0\le t<T$. Proposition~\ref{prop:potential}
therefore gives
\[
 \sup_{0\le t<T}\|\varphi(t)\|_{L^\infty(M)}
 \le \|\varphi_0\|_{L^\infty(M)}
       +C(C_S,C_D)\bigl(1+T\sqrt{\mathcal C(\varphi_0)}\bigr)
 <\infty.
\]
In particular,
\[
 \sup_{0\le t<T}\operatorname{osc}_M\varphi(t)
 \le 2\sup_{0\le t<T}\|\varphi(t)\|_{L^\infty(M)}<\infty.
\]
We apply Proposition~\ref{prop:gauged-bubble} to the original flow, with $a_t=\operatorname{Id}$ and $\psi_t=\varphi(t)$. The identity
\[
 a_t^*\omega_{\varphi(t)}
 =\omega+dd^c\psi_t>0
\]
and the oscillation and Sobolev bounds verify all its hypotheses. It follows that
\[
 \sup_{M\times[0,T)}
 |\operatorname{Rm}(g_{\varphi(t)})|_{g_{\varphi(t)}}<\infty.
\]
Thus the real Ricci curvatures are uniformly bounded as well.

\textbf{Step 2: Apply Chen--He's extension theorem.}
We use the finite-time form of Chen--He's extension theorem
\cite[Theorem~1.5]{ChenHeContinuation}.
Our flow has smooth initial metric $\omega_{\varphi_0}$, and \textbf{Step~1} supplies the required Ricci bound. Hence there are $\varepsilon>0$ and a smooth family of K\"ahler forms $\widehat\omega_t$,
$0\le t<T+\varepsilon$, satisfying the Calabi flow and agreeing with
$\omega_{\varphi(t)}$ for $t<T$. The K\"ahler class stays fixed because
\[
 \frac{d}{dt}[\widehat\omega_t]
 =[dd^cR_{\widehat\omega_t}]=0.
\]
Its average K\"ahler scalar curvature is therefore the same constant $\underline R$.
To retain the exact initial value, we define
\[
 \widehat\varphi(x,t)
 =\varphi_0(x)+\int_0^t
       \bigl(R_{\widehat\omega_s}(x)-\underline R\bigr)\,ds,
 \qquad 0\le t<T+\varepsilon.
\]
The integrand is smooth on each compact time interval. Thus
$\widehat\varphi$ is smooth, and spatial differentiation may be
passed under the integral. Since $dd^c\underline R=0$,
 \begin{align*}
 \omega+dd^c\widehat\varphi(t)
 =\omega+dd^c\varphi_0+
      \int_0^tdd^cR_{\widehat\omega_s}\,ds
 =\omega_{\varphi_0}+
      \int_0^t\partial_s\widehat\omega_s\,ds
 =\widehat\omega_t>0.
 \end{align*}
Consequently,
\[
 \partial_t\widehat\varphi
 =R_{\widehat\omega_t}-\underline R
 =R_{\widehat\varphi}-\underline R,
 \qquad \widehat\varphi(\cdot,0)=\varphi_0.
\]
For $t<T$, integrating the original equation
\eqref{eq:exact-Calabi-gauge} gives
\[
 \varphi(x,t)=\varphi_0(x)+
   \int_0^t\bigl(R_{\omega_{\varphi(s)}}(x)-\underline R\bigr)\,ds
 =\widehat\varphi(x,t).
\]
Thus the original potential extends smoothly past $T$, with no change of its initial value or time-dependent renormalisation.
\end{proof}

\subsection{Proof of Corollary~\ref{cor:Tian-main}}
\label{sec:Tian-cone}

\begin{proof}[Proof of Corollary~\ref{cor:Tian-main}]
Let $[0,T)$, $0<T\le\infty$, be the maximal smooth existence interval. As in the statement and Section~\ref{sec:preliminaries}, we write
\[
 g_t=g_{\varphi(t)},\qquad R_t=R_{\varphi(t)},\qquad
 dV_t=dV_{g_t}=\frac12\omega_{\varphi(t)}^2
              =\frac V2\,d\mu_t,
 \qquad \mathcal C_0=\mathcal C(\varphi_0).
\]
We shall prove the Sobolev estimate on this entire interval without assuming that $T$ is finite. It will then follow from Theorem~\ref{thm:main} that the maximal existence time is infinite.

\textbf{Step 1: Convert the energy hypothesis into a fixed gap.}
The hypotheses \eqref{eq:Tian-main-hypotheses} are
\[
 c_1(M)\cdot\Omega>0,\qquad
 \delta_\Omega
 =c_1(M)^2-\frac23\frac{(c_1(M)\cdot\Omega)^2}{V}>0,
 \qquad
 \mathcal C_0<\frac{16\pi^2}{V}\delta_\Omega.
\]
The volume and average K\"ahler scalar curvature are fixed along the flow:
\[
 \operatorname{Vol}(M,g_t)=\frac V2,
 \qquad
 \underline R=\frac{4\pi}{V}c_1(M)\cdot\Omega>0.
\]
This asserts positivity of the average scalar curvature.
By the monotonicity formula \eqref{eq:calabi-monotonicity}, we have
\begin{equation}
 0\le\mathcal C(\varphi(t))\le\mathcal C_0,
 \qquad 0\le t<T.
 \label{eq:Tian-energy-bound}
\end{equation}
Multiplying the strict initial energy bound by $6V>0$ gives
$6V\mathcal C_0<96\pi^2\delta_\Omega$. Hence
\begin{equation}
 \eta_0:=96\pi^2\delta_\Omega-6V\mathcal C_0>0.
 \label{eq:Tian-fixed-energy-gap}
\end{equation}
We record the conversion between the normalised energy and real scalar-curvature integrals. Since $d\mu_t$ is a probability measure and $\int_M(R_t-\underline R)\,d\mu_t=0$,
\[
 \begin{aligned}
 \int_MR_t^2\,d\mu_t
 =\int_M(R_t-\underline R)^2\,d\mu_t
   +2\underline R\int_M(R_t-\underline R)\,d\mu_t
   +\underline R^2\int_Md\mu_t
 =\mathcal C(\varphi(t))+\underline R^2.
 \end{aligned}
\]
Using $R_{\mathrm{real}}(g_t)=2R_t$ and $dV_t=(V/2)d\mu_t$, we obtain
\begin{equation}
 \int_MR_{\mathrm{real}}(g_t)^2\,dV_t
 =4\int_MR_t^2\,dV_t
 =2V\bigl(\mathcal C(\varphi(t))+\underline R^2\bigr).
 \label{eq:Tian-real-scalar-energy}
\end{equation}
Let \[ \sigma_t=
 \left(\int_M
   \bigl(R_{\mathrm{real}}(g_t)-2\underline R\bigr)^2\,dV_t
 \right)^{1/2}.
\]
The same conversion gives
\begin{equation}
 \sigma_t^2
 =4\int_M(R_t-\underline R)^2\,dV_t
 =2V\mathcal C(\varphi(t))\le2V\mathcal C_0.
 \label{eq:Tian-real-fluctuation}
\end{equation}

\textbf{Step 2: A lower bound for the squared Yamabe constant.}
For completeness, the argument from \cite[Section~5]{CLW} is included below.
For a smooth Riemannian metric $q$ on the closed four-manifold $M$, we define its Yamabe constant by
\begin{equation}
 \mathcal Y(q)=
 \inf_{\substack{f\in C^\infty(M,\mathbb R)\\f\not\equiv0}}
 \frac{\displaystyle\int_M
   \bigl(6|\nabla f|_q^2+R_{\mathrm{real}}(q)f^2\bigr)\,dV_q}
 {\displaystyle\left(\int_M|f|^4\,dV_q\right)^{1/2}}.
 \label{eq:Yamabe-real-definition}
\end{equation}
All gradients and Laplacians in this step are real Riemannian ones. The conformal class is
\[
 [q]=\{u^2q:u\in C^\infty(M),\ u>0\}.
\]
With our nonpositive-spectrum Laplacian convention, the conformal transformation laws are
\[
 R_{\mathrm{real}}(u^2q)u^3
 =-6\Delta_q^{\mathrm R}u+R_{\mathrm{real}}(q)u,
 \qquad dV_{u^2q}=u^4dV_q.
\]
The coefficient $6=4(m-1)/(m-2)$ and the power
$4/(m-2)=2$ in $u^2q$ are the specialisations to real dimension
$m=4$. For these transformation laws and the equivalent Yamabe quotient, see \cite[Section~5]{CLW}, where the Laplacian has the opposite sign convention.
For $u>0$, we multiply the scalar-curvature formula by $u$ and integrate to obtain
 \begin{align*}
 \int_M R_{\mathrm{real}}(u^2q)\,dV_{u^2q}
 =\int_Mu\bigl(-6\Delta_q^{\mathrm R}u
                  +R_{\mathrm{real}}(q)u\bigr)\,dV_q
 =\int_M\bigl(6|\nabla u|_q^2
                  +R_{\mathrm{real}}(q)u^2\bigr)\,dV_q.
 \end{align*}
Also $\operatorname{Vol}(M,u^2q)=\int_Mu^4\,dV_q$. Thus the quotient in \eqref{eq:Yamabe-real-definition}, for positive $u$, is the normalised total scalar curvature of $u^2q$.
Allowing arbitrary real smooth $f$ does not change the infimum. Indeed, the positive functions form a subclass, so their infimum cannot be smaller. Conversely, for a nonzero smooth real-valued function $f$, let
$u_\varepsilon=(f^2+\varepsilon^2)^{1/2}>0$. Then
\[
 |\nabla u_\varepsilon|_q^2
 =\frac{f^2}{f^2+\varepsilon^2}|\nabla f|_q^2
 \le |\nabla f|_q^2,
 \qquad u_\varepsilon^2=f^2+\varepsilon^2.
\]
The scalar-curvature part of the numerator converges to that for
$f$, and the denominator converges to
$(\int_M|f|^4\,dV_q)^{1/2}>0$. Therefore the limit superior of the
quotients for $u_\varepsilon$ is at most the quotient for $f$. Taking the infimum over $f$ proves the reverse inequality.
We use the four-dimensional form of the Yamabe existence theorem
\cite{Schoen}, also recalled in \cite[Section~5]{CLW}: for every
smooth metric $q$ on a closed connected four-manifold, the infimum in \eqref{eq:Yamabe-real-definition} is attained by a smooth positive function. By constant rescaling this function can be chosen so that
\[
 \int_Mu^4\,dV_q=1,
 \qquad
 -6\Delta_q^{\mathrm R}u+R_{\mathrm{real}}(q)u
       =\mathcal Y(q)u^3.
\]
Equivalently, $q_Y=u^2q$ has unit volume and constant real scalar curvature equal to $\mathcal Y(q)$. The scalar curvature equals the Yamabe constant, rather than just an unspecified constant, because this is a minimising metric with unit volume.
Let $\mathcal Y_t=\mathcal Y(g_t)$. For each fixed $t<T$, the metric
$g_t$ is smooth on the closed four-manifold $M$, so the theorem gives
a positive smooth $u_t$ and
\[
 q_t=u_t^2g_t,\qquad
 \operatorname{Vol}(M,q_t)=1,\qquad
 R_{\mathrm{real}}(q_t)\equiv\mathcal Y_t.
\]
For a complex surface, $c_1(M)^2=2\chi(M)+3\tau(M)$ with the complex orientation. Adding the two displayed curvature identities in \eqref{CGB} cancels the $|W_q^-|^2$ terms and gives
\begin{equation}
 4\pi^2c_1(M)^2
 =\int_M\left(
  2|W_q^+|_q^2+\frac1{24}R_{\mathrm{real}}(q)^2
       -\frac12|E_q|_q^2\right)dV_q.
 \label{eq:Tian-Chern-Weil}
\end{equation}

We next compare the Weyl integrals of $q_t$ and $g_t$. Under
$q\mapsto u^2q$ in real dimension four, the Hodge star on two-forms
is unchanged: its conformal weight is $u^{4-2\cdot2}=1$. Thus the self-dual subbundle is the same for the two metrics. Conformal covariance of the Weyl tensor gives
\[
 |W_{u^2q}^+|_{u^2q}^2=u^{-4}|W_q^+|_q^2,
 \qquad dV_{u^2q}=u^4dV_q.
\]
The two factors cancel, so
\[
 \int_M|W_{q_t}^+|_{q_t}^2\,dV_{q_t}
 =\int_M|W_{g_t}^+|_{g_t}^2\,dV_t.
\]

We apply this K\"ahler identity \eqref{CGB W+} only to $g_t$, not to the generally non-K\"ahler minimiser $q_t$. Combining it with conformal invariance and the unit-volume constant-scalar-curvature properties of $q_t$ gives
\[
 \begin{aligned}
 \int_MR_{\mathrm{real}}(q_t)^2\,dV_{q_t}
 =\mathcal Y_t^2,\quad
 2\int_M|W_{q_t}^+|_{q_t}^2\,dV_{q_t}
 =\frac1{12}\int_MR_{\mathrm{real}}(g_t)^2\,dV_t.
 \end{aligned}
\]
Substitution in \eqref{eq:Tian-Chern-Weil} yields
\[
 4\pi^2c_1(M)^2
 =\frac{\mathcal Y_t^2}{24}
    +\frac1{12}\int_MR_{\mathrm{real}}(g_t)^2\,dV_t
    -\frac12\int_M|E_{q_t}|_{q_t}^2\,dV_{q_t}.
\]
Multiplying by $24$, solving for $\mathcal Y_t^2$, and discarding the nonnegative Ricci term on the right-hand side, we obtain
\begin{equation}
 \begin{split}
 \mathcal Y_t^2
 &=96\pi^2c_1(M)^2
      -2\int_MR_{\mathrm{real}}(g_t)^2\,dV_t
      +12\int_M|E_{q_t}|_{q_t}^2\,dV_{q_t}\\
 &\ge96\pi^2c_1(M)^2
      -2\int_MR_{\mathrm{real}}(g_t)^2\,dV_t.
 \end{split}
 \label{eq:Tian-Yamabe-square}
\end{equation}

\textbf{Step 3: The squared Yamabe gap from the curvature identities.}
By the fixed average-curvature formula,
\[
 4V\underline R^2
 =4V\left(\frac{4\pi}{V}c_1(M)\cdot\Omega\right)^2
 =64\pi^2\frac{(c_1(M)\cdot\Omega)^2}{V}.
\]
Hence the definition of $\delta_\Omega$ gives the exact equality
\[
 96\pi^2c_1(M)^2-4V\underline R^2=96\pi^2\delta_\Omega.
\]
Using \eqref{eq:Tian-real-scalar-energy} in
\eqref{eq:Tian-Yamabe-square}, we obtain
\begin{equation}
 \mathcal Y_t^2
 \ge96\pi^2c_1(M)^2-4V\underline R^2
                         -4V\mathcal C(\varphi(t))
 =96\pi^2\delta_\Omega-4V\mathcal C(\varphi(t)).
 \label{eq:Tian-Yamabe-cone}
\end{equation}
Subtracting $\sigma_t^2=2V\mathcal C(\varphi(t))$, as given by \eqref{eq:Tian-real-fluctuation}, from \eqref{eq:Tian-Yamabe-cone}, and then using \eqref{eq:Tian-fixed-energy-gap}, we obtain
\begin{equation}
 \mathcal Y_t^2-\sigma_t^2
 \ge96\pi^2\delta_\Omega-6V\mathcal C(\varphi(t))
 \ge\eta_0>0.
 \label{eq:Tian-squared-gap}
\end{equation}

\textbf{Step 4: A lower bound for the Yamabe constant.}
A lower bound for $\mathcal Y_t^2$ alone does not imply
$\mathcal Y_t>0$. It is therefore necessary to rule out the nonpositive case using $\underline R>0$.
For any nonzero smooth real-valued function $f$, the Cauchy--Schwarz inequality gives
\[
 \begin{aligned}
 \left|2\int_M(R_t-\underline R)f^2\,dV_t\right|
 \le
 \left(\int_M[2(R_t-\underline R)]^2\,dV_t\right)^{1/2}
 \left(\int_M(f^2)^2\,dV_t\right)^{1/2}
 =\sigma_t\left(\int_M|f|^4\,dV_t\right)^{1/2}.
 \end{aligned}
\]
In the numerator of the Yamabe quotient, we write $2R_t=2\underline R+2(R_t-\underline R)$. Since the gradient term and $2\underline R\int_Mf^2dV_t$ are nonnegative,
\[
 \begin{aligned}
 \int_M\bigl(6|\nabla f|_{g_t}^2+2R_tf^2\bigr)\,dV_t
 &\ge6\int_M|\nabla f|_{g_t}^2\,dV_t
       +2\underline R\int_Mf^2\,dV_t
       -\sigma_t\left(\int_M|f|^4\,dV_t\right)^{1/2}\\
 &\ge-\sigma_t\left(\int_M|f|^4\,dV_t\right)^{1/2}.
 \end{aligned}
\]
Dividing by the positive denominator and taking the infimum over $f$, we obtain from \eqref{eq:Yamabe-real-definition}
\[
 \mathcal Y_t\ge-\sigma_t.
\]
If $\mathcal Y_t\le0$, then
$-\sigma_t\le\mathcal Y_t\le0$, and therefore
$\mathcal Y_t^2\le\sigma_t^2$, contradicting
\eqref{eq:Tian-squared-gap}. Thus
\[
 \mathcal Y_t>0.
\]
Since $\sigma_t\ge0$ and $\mathcal Y_t^2>\sigma_t^2$, it follows that $\mathcal Y_t>\sigma_t$.
To obtain a bound independent of time, we define
\begin{equation}
 \gamma_0
 =\sqrt{96\pi^2\delta_\Omega-4V\mathcal C_0}
       -\sqrt{2V\mathcal C_0}.
 \label{eq:Tian-uniform-gap-constant}
\end{equation}
The energy gap gives
\[
 96\pi^2\delta_\Omega-4V\mathcal C_0
 =2V\mathcal C_0+\eta_0>2V\mathcal C_0\ge0.
\]
Both square roots are well defined and the first is strictly larger. From \eqref{eq:Tian-fixed-energy-gap}, we further have
\[
 \gamma_0
 =\frac{\eta_0}
 {\sqrt{96\pi^2\delta_\Omega-4V\mathcal C_0}
                    +\sqrt{2V\mathcal C_0}}>0.
\]
We can now take the positive square root in
\eqref{eq:Tian-Yamabe-cone}. In combination with
\eqref{eq:Tian-real-fluctuation}, this yields
\begin{equation}
 \begin{split}
 \mathcal Y_t-\sigma_t
 &\ge\sqrt{96\pi^2\delta_\Omega-4V\mathcal C(\varphi(t))}
       -\sqrt{2V\mathcal C(\varphi(t))}\\
 &\ge\sqrt{96\pi^2\delta_\Omega-4V\mathcal C_0}
       -\sqrt{2V\mathcal C_0}
 =\gamma_0.
 \end{split}
 \label{eq:Tian-uniform-Yamabe-gap}
\end{equation}
The second inequality follows term by term from
$\mathcal C(\varphi(t))\le\mathcal C_0$.

\textbf{Step 5: The uniform Sobolev inequality.}
We follow the argument in the proof of
\cite[Lemma~2.3]{ChenHeToric}.
By the definition of an infimum in \eqref{eq:Yamabe-real-definition}, every nonzero real smooth $f$ satisfies
\[
 \mathcal Y_t\left(\int_M|f|^4\,dV_t\right)^{1/2}
 \le6\int_M|\nabla f|_{g_t}^2\,dV_t
       +2\int_MR_tf^2\,dV_t.
\]
Decomposing $R_t$ into its mean and fluctuation and applying the Cauchy--Schwarz estimate from \textbf{Step~4}, we obtain
\[
 \begin{aligned}
 \mathcal Y_t\left(\int_M|f|^4\,dV_t\right)^{1/2}
 &\le6\int_M|\nabla f|_{g_t}^2\,dV_t
       +2\underline R\int_Mf^2\,dV_t
       +2\int_M(R_t-\underline R)f^2\,dV_t\\
 &\le6\int_M|\nabla f|_{g_t}^2\,dV_t
       +2\underline R\int_Mf^2\,dV_t
       +\sigma_t\left(\int_M|f|^4\,dV_t\right)^{1/2}.
 \end{aligned}
\]
After moving the final term to the left, we obtain the coefficient
$\mathcal Y_t-\sigma_t$, which is at least $\gamma_0>0$ by
\eqref{eq:Tian-uniform-Yamabe-gap}. Division therefore gives
\begin{equation}
 \left(\int_M|f|^4\,dV_t\right)^{1/2}
 \le\frac6{\gamma_0}\int_M|\nabla f|_{g_t}^2\,dV_t
       +\frac{2\underline R}{\gamma_0}\int_Mf^2\,dV_t,
 \qquad 0\le t<T.
 \label{eq:Tian-volume-Sobolev}
\end{equation}
The case $f\equiv0$ is immediate.
To match Definition~\ref{defn:Sobolev}, we convert the three integrals in \eqref{eq:Tian-volume-Sobolev} to $d\mu_t$:
 \begin{align*}
\int_M|f|^4\,dV_t
 =V/2\int_M|f|^4\,d\mu_t,\quad
 \int_M|\nabla f|_{g_t}^2\,dV_t
 =\frac V2\int_M|\nabla f|_{g_t}^2\,d\mu_t,\quad
 \int_Mf^2\,dV_t=\frac V2\int_Mf^2\,d\mu_t.
 \end{align*}
Substituting these equalities and dividing by $\sqrt{V/2}>0$, we obtain
\[
 \left(\int_M|f|^4\,d\mu_t\right)^{1/2}
 \le\frac{\sqrt{V/2}}{\gamma_0}
 \left(6\int_M|\nabla f|_{g_t}^2\,d\mu_t
                 +2\underline R\int_Mf^2\,d\mu_t\right).
\]
Let \begin{equation} C_S^*=
 \frac{\sqrt{V/2}}{\gamma_0}\max\{6,2\underline R\}.
 \label{eq:Tian-Sobolev-constant}
\end{equation}
Both integrals on the right are nonnegative, so
\begin{equation}
 \left(\int_M|f|^4\,d\mu_t\right)^{1/2}
 \le C_S^*\left(
 \int_M|\nabla f|_{g_t}^2\,d\mu_t+
 \int_M|f|^2\,d\mu_t\right),
 \qquad 0\le t<T.
 \label{eq:Tian-probability-Sobolev}
\end{equation}
For completeness, we fix $t<T$ and let $f\in W^{1,2}(M,g_t)$. Let $f_j$ be smooth real-valued functions converging to $f$ in $W^{1,2}(M,g_t)$. Applying \eqref{eq:Tian-probability-Sobolev} to $f_j-f_k$, we obtain
\[
 \|f_j-f_k\|_{L^4(\mu_t)}^2
 \le C_S^*\left(
  \|\nabla(f_j-f_k)\|_{L^2(\mu_t)}^2
          +\|f_j-f_k\|_{L^2(\mu_t)}^2\right)\longrightarrow0.
\]
Hence $f_j$ has an $L^4$ limit. Since $\mu_t(M)=1$, $L^4$ convergence implies $L^2$ convergence, and this limit must equal $f$. Passing to the limit in the inequality for $f_j$ proves
\eqref{eq:Tian-probability-Sobolev} for $f$.
Therefore the least constant in \eqref{eq:Sobolev} satisfies
\[
 C_S(\varphi(t))\le C_S^*,\qquad 0\le t<T.
\]
The number $C_S^*$ depends only on $V$, $c_1(M)\cdot\Omega$,
$\delta_\Omega$, and $\mathcal C_0$. It is independent of $t$ and
of the maximal existence time $T$.
Finally, suppose that $T<\infty$. By the estimate just proved, we can apply Theorem~\ref{thm:main} on the entire existence interval and extend the original Calabi flow smoothly beyond $T$, with initial potential $\varphi_0$. This contradicts the maximality of $T$, and hence $T=\infty$, completing the proof of Corollary~\ref{cor:Tian-main}.
\end{proof}

\vskip10pt
Haozhao Li, Institute of Geometry and Physics, and  School of Mathematical Sciences, University of Science and Technology of China, No. 96 Jinzhai Road, Hefei, Anhui Province, 230026, China.  Email: hzli@ustc.edu.cn\par\medskip

Bing Wang, Institute of Geometry and Physics, and School of Mathematical Sciences, University of Science and Technology of China, Hefei 230026, China, and Hefei National Laboratory, Hefei 230088, China. Email: topspin@ustc.edu.cn\par\medskip

Kai Zheng, University of Chinese Academy of Sciences, Beijing, 100190, China. Email: KaiZheng@amss.ac.cn

\end{document}